\providecommand{\U}[1]{\protect\rule{.1in}{.1in}}
\newcommand{\R}{{{\mathbb R}}}
\newtheorem{theorem}{Theorem}
\newtheorem{corollary}[theorem]{Corollary}
\newtheorem{definition}[theorem]{Definition}
\newtheorem{lemma}[theorem]{Lemma}
\newtheorem{remark}[theorem]{Remark}
\begin{document}

\title{On Helmholtz equation and Dancer's type entire solutions for nonlinear elliptic  equations }

\author{Yong Liu, Juncheng Wei}

\newcommand{\Addresses}{{\bigskip \footnotesize

\medskip

\noindent {Yong Liu} \textsc{School of Mathematics and Physics, North China Electric Power University, Beijing, China,} \par\nopagebreak
noindent \textit{E-mail address:  liuyong@ncepu.edu.cn}

\medskip
\noindent (Juncheng Wei) \textsc{Department of Mathematics, University of British Columbia, Vancouver, B.C., Canada, V6T 1Z2}\par\nopagebreak
\noindent \textit{E-mail address}: \texttt{jcwei@math.ubc.ca}}}

\date{\today}
\maketitle

\begin{abstract}
Starting from a bound state (positive or sign-changing) solution to
$$ -\Delta \omega_m =|\omega_m|^{p-1} \omega_m -\omega_m \ \ \mbox{in}\  \R^n, \ \omega_m \in H^2 (\R^n)$$
and  solutions to the Helmholtz equation
$$ \Delta u_0 + \lambda u_0=0 \ \ \mbox{in} \ \R^n, \ \lambda>0, $$
we build new Dancer's type entire solutions to the nonlinear scalar equation
$$ -\Delta u =|u|^{p-1} u-u \ \ \mbox{in} \ \R^{m+n}. $$

\end{abstract}

\maketitle

\section{\bigskip Introduction}

The  purpose of this  note is to construct new entire (positive or sign-changing) solutions for the
elliptic equation
\begin{equation}
-\Delta u =\left\vert u \right\vert^{p-1}u -u\ \ \text{ in }\ \R^{m+n},\label{Sch}%
\end{equation}
where the exponent $p>1$  and satisfies   some conditions. A solution to (\ref{Sch}) corresponds to a standing wave  to the nonlinear Schr\"{o}dinger equation
\begin{equation}
-iu_t = \Delta u + |u|^{p-1} u, \ \ \mbox{in} \ \R^{m+n}.
\end{equation}
 It  also serves as models in different areas of applied mathematics such as pattern formation in mathematical biology.

  Equation (\ref{Sch})  has been studied extensively and there is a vast literature on this
subject. Let us first describe  the classical example of Dancer's solutions. To describe these solutions, we recall that when $n=0,$ equation (\ref{Sch}) reduces to
\begin{equation}
-\Delta u=\left\vert u\right\vert ^{p-1}u-u\text{ in }\ \R^{m}, \label{ODE}%
\end{equation}
which admits a unique positive radially symmetric ground state solution $w_m$ decaying exponentially fast to zero at infinity, provided that $p$ is subcritical, i.e. $ 1<p<\frac{m+2}{m-2}$.
The linearized equation of $\left(  \ref{ODE}\right)  $ around $w_m$ is
\[
L_{w_m}\eta=-\Delta \eta +\left(  1-pw_m^{p-1}\right)  \eta,
\]
acting on $H^{2}\left(  \mathbb{R}^{m}\right).$ The essential spectrum of $L_{w_m}$ is
$[1,+\infty).$ It is known that this operator has a unique negative eigenvalue $-\lambda_{1}.$ We choose a
corresponding positive eigenfunction $Z_1\left(  x\right)  $.
Dancer \cite{Dancer} first constructed new positive solutions with only partial decaying using bifurcation theory.
He proved that for $n=1$, there exists a family of solutions to (\ref{Sch}) with the following behavior:
\begin{equation}
\label{new12}
 u(x, y)= w_m(x)+ \epsilon Z_1(x) \cos (\sqrt{\lambda_1} y) + o(\epsilon e^{-\frac{1}{2}|x|}), \ |\epsilon|<<1, \ (x,y) \in \R^{m+1}.
 \end{equation}
 Moreover such solutions are periodic in $y$.   We call this type of solutions as Dancer's type.

The existence of Dancer's solution generates lots of interest in constructing other type of solutions. Variational and gluing methods have been
successfully applied in the construction of solutions. There is  also a deep connection between the solutions of (\ref{Sch}) and the constant mean curvature surfaces (CMC). We refer to
\cite{AMPW, Dancer, Mal, Musso,Santra} and the references therein for more discussions.

In this paper we extend Dancer's type solutions to general dimensions $n\geq 3$. Let $\omega_m$ be a  bound state solution (not necessary positive) of
equation $\left(  \ref{ODE}\right)  $ with $\omega_m \left(  x\right)
\rightarrow0$ as $\left\vert x\right\vert \rightarrow+\infty.$ Existence
results of this type solutions for subcritical exponent $p$ could be found in
\cite{B1,B2}. There are also plenty of sign-changing radial solutions. In fact, for each integer $k\geq 0$ there are sign-changing radial solutions with $k$ zeroes.  There are also infinitely many sign-changing solutions without any symmetry. See \cite{BW, AMPW, conti}.  Slightly abusing the notation, we use $\left(  x,y\right)  $ to
denote the vectors in $\mathbb{R}^{m+n}$, where $x$ represents the first $m$
coordinates. Denote the positive eigenvalues of $-L_{\omega_m }$ as
$ \lambda_1,...,\lambda_k$, with the corresponding eigenfunctions $Z_1, ..., Z_k$. Let $Z_{k+1}, ..., Z_l$ be the eigenfunctions of the zero eigenvalue. We could also assume that $Z_1,...,Z_l$ consists an orthonormal basis for the nonnegative eigenspace in $L^2(\mathbb{R}^m)$. (Note that we don't assume the non-degeneracy of $w$.)

Our main result can be stated roughly as follows: Starting from solutions to the Helmholtz equation
\begin{equation}
\label{Helm11}
\Delta u_j +\lambda_j u_j=0 \ \ \mbox{in} \ \R^n, j=1,..., k,
\end{equation}
we find a family of solutions to (\ref{Sch}) with the following asymptotic behavior:
\begin{equation}
\label{new13}
 u(x, y)= \omega_m (x)+ \epsilon \sum_{j=1}^k Z_j (x) u_j (y) + o(  \epsilon e^{-\frac{1}{2}|x|}), \ \ (x,y) \in \R^{m+n}.
 \end{equation}
 As a consequence, for $n\geq3,$ there are abundance of solutions near $\omega_m (x)$. Unlike the classical Dancer's solution, these solutions are not
periodic in the $y$ variable. As a matter of fact, they converge to $\omega_m \left(
x\right)  $ as $|y|\rightarrow\infty.$  The existence of these type of solutions makes it more difficult to classify entire solutions near the ground state profile. Nevertheless we believe that all solutions near $\omega_m$ should be described by (\ref{new13}). We refer to \cite{BF, GMX} and the references therein for partial progress on this issue.

Our idea of the proof is in the spirit
similar to that of \cite{Susana}, where existence of small amplitude solutions to the
Ginzburg-Landau equation in dimension 3 and 4 has been proved.
 In \cite{Weth}, using dual variational method,
the existence of a sequence of solutions $u_{k}$ of
$$\Delta u + u + |u|^{p-1} u=0\ \ \ \mbox{in} \ \R^{n}, $$
 with $\left\Vert u_{k}\right\Vert _{L^{p}}\rightarrow+\infty,$ has been
proved under certain condition on $p$ and $n.$ A similar functional-analytical frame was used.

To describe our main results, we need to introduce some notations. Let $\lambda>0$ be a fixed positive number. Consider the so-called Helmholtz equation
\begin{equation}
\Delta u+\lambda u=0\text{ in }\mathbb{R}^{n}. \label{Helm}%
\end{equation}
We are interested in solutions to (\ref{Helm}) with the following decaying property
\begin{equation}
\left\vert u \left(  y\right)  \right\vert \leq C\left(  1+\left\vert
y\right\vert \right)  ^{-\frac{n-1}{2}}. \label{upperbound}%
\end{equation}

There are plenty  many solutions to (\ref{Helm}) satisfying (\ref{upperbound}).  We start with radial solutions.
Let $s$ be a
parameter and $n\geq2$. Consider the equation
\begin{equation}
\varphi^{\prime\prime}+\frac{n-1}{r}\varphi^{\prime}+\left(1
-\frac{s^{2}}{r^{2}}\right)  \varphi=0. \label{Bs}%
\end{equation}
For $n=2,$ it has a regular solution $J_{2,s}$, which is the Bessel functions of the first kind. For general $n\geq2,$ $\left(
\ref{Bs}\right)  $ has a regular solution
\[
J_{n,s}\left(  r\right)  =r^{1-\frac{n}{2}}J_{2,\sqrt{\left(  \frac{n}%
{2}-1\right)  ^{2}+s^{2}}}\left(  r\right)  .
\]
It is known that
\[
J_{n,s}\left(  r\right)  \leq C\left(  1+r\right)  ^{-\frac{n-1}{2}}.
\]
Hence $J_{n, 0} (\sqrt{\lambda} r)$ is a solution to (\ref{Helm})-(\ref{upperbound}).

Given finitely many points $y_{j}\in \mathbb{R}^{n},j=1,...,q,$ functions of the form
\[%
{\displaystyle\sum\limits_{j=1}^{q}}
J_{n,0}\left( \sqrt{\lambda}( y-y_{j})\right)
\]
are also  solutions of $\left(  \ref{Helm}\right)  $ satisfying $\left(
\ref{upperbound}\right).$  In \cite{Enciso}, solutions of (\ref{Helm}) satisfying (\ref{upperbound})  with zero level sets arbitrarily close to any compact surfaces are constructed.

 Our main result states that from these solutions  of the Helmholtz equation we could
construct solutions of $\left(  \ref{Sch}\right).$

\begin{theorem}
\label{main0}Let $n\geq6$ and $p\geq\frac{n+2}{n-2}.$ For any $\varepsilon$
with $\left\vert \varepsilon\right\vert $ small enough, there is a solution
$u_{\varepsilon}$ to the equation
\[
\Delta u+\left\vert u\right\vert ^{p-1}u-u=0,\text{ in }\mathbb{R}^{m+n},
\]
such that
\[
u_{\varepsilon}=\omega_{n}\left(  x\right)  +\varepsilon  \sum_{j=1}^k Z_{j}\left(
x\right)  u_{j}\left(  y\right)  +o\left(  \varepsilon\right)
\]
where $ u_j$ are solutions of (\ref{Helm})-(\ref{upperbound}), with $\lambda_j$ being the negative eigenvalues of $ L_{\omega_m}$.

\end{theorem}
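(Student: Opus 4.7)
The plan is a Lyapunov--Schmidt reduction centred on the explicit approximate solution suggested by the statement. I would set
\[
u = \omega_m(x) + \varepsilon \phi_0(x,y) + \varepsilon \psi(x,y),\qquad \phi_0(x,y) = \sum_{j=1}^{k} Z_j(x) u_j(y),
\]
and solve for the correction $\psi$. Since $L_{\omega_m} Z_j = -\lambda_j Z_j$ and $\Delta u_j + \lambda_j u_j = 0$, a direct separation-of-variables computation gives $L\phi_0 = 0$, where $L := -\Delta + 1 - p|\omega_m|^{p-1}$ is the linearised operator on $\mathbb{R}^{m+n}$. Substituting the ansatz into (\ref{Sch}) reduces the problem to
\[
L\psi = \varepsilon^{-1}\mathcal{N}(\varepsilon \phi_0 + \varepsilon \psi), \qquad \mathcal{N}(v) := |\omega_m+v|^{p-1}(\omega_m+v) - |\omega_m|^{p-1}\omega_m - p|\omega_m|^{p-1} v.
\]
For small $v$ the pointwise bound $|\mathcal{N}(v)| \lesssim |\omega_m|^{(p-2)_+} v^2 + |v|^{p}$ makes the right-hand side formally $O(\varepsilon)$.

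Because $\omega_m$ depends only on $x$, the operator splits as $L = L_{\omega_m}^{(x)} - \Delta_y$. I would decompose $\psi(x,y) = \sum_{j=1}^{l} \psi_j(y) Z_j(x) + \psi^{\perp}(x,y)$ with $\psi^{\perp}(\cdot, y)$ orthogonal in $L^2(\mathbb{R}^m)$ to every $Z_j$. Projecting the equation produces a Helmholtz problem $(-\Delta_y - \lambda_j)\psi_j = \varepsilon^{-1}\langle \mathcal{N}, Z_j\rangle_{L^2_x}$ for $1 \le j \le k$, a Poisson problem $-\Delta_y \psi_j = \varepsilon^{-1}\langle \mathcal{N}, Z_j\rangle_{L^2_x}$ for $k+1 \le j \le l$, and on the complement $(L_{\omega_m}^{(x),\perp} - \Delta_y)\psi^{\perp} = \varepsilon^{-1}\mathcal{N}^{\perp}$. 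The orthogonal block is the easy one: $L_{\omega_m}^{(x),\perp}$ is uniformly positive on its domain, so after Fourier transform in $y$ the symbol is bounded below by a positive constant, and a standard elliptic argument combined with Agmon-type exponential decay in $x$ yields a bounded right inverse.

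The genuine difficulty is inverting each Helmholtz block $-\Delta_y - \lambda_j$ in a space compatible with the prescribed decay $(1+|y|)^{-(n-1)/2}$ of the $u_j$. Here I would appeal to the Agmon--Kato--Kuroda limiting absorption principle, realising $(-\Delta_y - \lambda_j - i0)^{-1}$ as a bounded operator between weighted $L^2$ spaces $L^{2,\,1/2+\delta}_y \to L^{2,\,-1/2-\delta}_y$, or equivalently in the Agmon--H\"ormander pair $(B, B^*)$, whose target norm is tailored to the decay $(1+|y|)^{-(n-1)/2}$. The zero-eigenvalue blocks are handled by the Newtonian potential on $\mathbb{R}^n$, bounded on the same weighted classes because $n \ge 3$ (certainly $n \ge 6$ here).

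Assembling the partial inverses into a single operator $\mathcal{T}$ recasts the problem as the fixed point $\psi = \varepsilon^{-1}\mathcal{T}\,\mathcal{N}(\varepsilon \phi_0 + \varepsilon \psi)$, which I would solve by the Banach contraction principle on a small ball in the product Banach space. \emph{The main obstacle} is closing this nonlinear estimate: the resolvent $\mathcal{T}$ demands its input in a source space strictly smaller than its target, yet the natural control on $\psi$ only lives in the weaker target space. Reconciling these competing requirements — pairing the decay margin of the Helmholtz solutions against the pointwise estimate for $\mathcal{N}$ multiplied by the exponential $x$-decay of $Z_j(x)|\omega_m|^{(p-2)_+}$ — is exactly what forces the hypotheses $n\ge 6$ and $p \ge (n+2)/(n-2)$, which arise as the break-even exponents for integrating the weighted quadratic/$p$-th-power nonlinearity against the Helmholtz weights on $\mathbb{R}^n$.
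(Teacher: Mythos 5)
Your ansatz, the projection onto $\{Z_j\}$ versus its complement, and the appeal to the free Helmholtz resolvent via a limiting-absorption procedure all match the paper's outline. The decisive divergence is the functional-analytic framework, and it is precisely where your argument has a gap that the paper's choice is designed to avoid.

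The paper does \emph{not} work in Agmon--H\"{o}rmander weighted $L^2$. Instead, the key tool is the Kenig--Ruiz--Sogge uniform Sobolev inequality (the paper's Lemma~\ref{Sobolev}),
\[
\left\Vert \psi\right\Vert_{L^q(\mathbb{R}^n)} \le C \left\Vert \Delta\psi + \psi\right\Vert_{L^{q'}(\mathbb{R}^n)},\qquad \tfrac{2(n+1)}{n-1}\le q\le \tfrac{2n}{n-2},
\]
together with the intersection norm $\|\cdot\|_{L^q}+\|\cdot\|_{L^\infty}$. The exponent is chosen at the top endpoint $q=\tfrac{2n}{n-2}$, so that $q'=\tfrac{2n}{n+2}$ and the nonlinear estimate closes exactly when $\bar p := \min\{p,2\}\ge q-1 = \tfrac{n+2}{n-2}$. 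That identity is the origin of both hypotheses: $p\ge \tfrac{n+2}{n-2}$ is the closing condition $\bar p \ge q-1$, and $n\ge 6$ is needed so that even the $p\ge 2$ case (where $\bar p = 2$) satisfies $2 \ge \tfrac{n+2}{n-2}$. Your claim that $n\ge 6$ and $p\ge\tfrac{n+2}{n-2}$ arise as break-even exponents for weighted $L^2$ integration of $|u_j|^{\bar p}$ against Helmholtz weights is not what the computation gives: the pointwise decay $(1+|y|)^{-(n-1)/2}$ raised to the power $\bar p$ and tested against $L^{2,1/2+\delta}$ yields the threshold $\bar p > \tfrac{n+1}{n-1}$, which is the condition appearing in Theorem~\ref{main1} (the radial case), not in Theorem~\ref{main0}.

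The genuine gap is the one you flag as ``the main obstacle'' and then leave open. Membership of $\psi_j$ in $L^{2,-1/2-\delta}(\mathbb{R}^n)$ gives no pointwise control, so there is no way to bound $|\psi_j|^{\bar p}$, or $|\psi|^{\bar p}$ for the orthogonal piece, in the source space $L^{2,1/2+\delta}$ required by the limiting-absorption inverse; the iteration simply does not close in the $(B,B^*)$ framework without an additional mechanism. The paper's remedy is structural: it works in $L^q\cap L^\infty$, the $L^\infty$ component supplies the pointwise control, the $L^2$-theory on the orthogonal complement plus interpolation to $L^q$ gives the rest (Lemma~\ref{Linearsol}), and the KRS inequality carries the Helmholtz blocks from $L^{q'}$ to $L^q$. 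Unless you replace your weighted-$L^2$ resolvent estimate by an $L^{q'}\to L^q$ bound of KRS/Stein--Tomas type (or otherwise inject pointwise information into the scheme), the contraction cannot be completed, and the specific exponent conditions in the theorem remain unexplained by your setup.
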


As a corollary of the proof of this theorem, in the case that $\omega_m$ is the positive radially symmetric solution $w_m$, we have the following result.

\begin{corollary}
\label{main}Let $n\geq5$ and $p\geq\frac{n+3}{n-1}.$ Let $w_m$ be the unique positive solutions of (\ref{Sch}).  For any $\varepsilon$
with $\left\vert \varepsilon\right\vert $ small enough, there is a positive
solution $u_{\varepsilon}$ to $\left(  \ref{Sch}\right)  $ such that
\[
u_{\varepsilon}=w_m\left(  x\right)  +\varepsilon Z_1\left(  x\right)
u_{1}\left(  y\right)  +o\left(  \varepsilon\right)  .
\]

\end{corollary}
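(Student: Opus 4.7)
The plan is to specialize the construction of Theorem~\ref{main0} to the case $\omega_m = w_m$, where $w_m$ is the unique positive radial ground state of (\ref{ODE}). For this $w_m$, the operator $L_{w_m}$ has exactly one negative eigenvalue $-\lambda_1$, with a positive radial eigenfunction $Z_1$. Hence $k=1$ in (\ref{new13}) and the sum collapses to the single term $\varepsilon Z_1(x) u_1(y)$; the reduced bifurcation equation obtained from the Lyapunov--Schmidt scheme is exactly the Helmholtz equation $\Delta u_1 + \lambda_1 u_1 = 0$, satisfied by hypothesis.

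For positivity I would compare the leading correction with $w_m$ pointwise. Since $w_m(x) \sim c|x|^{-(m-1)/2} e^{-|x|}$ at infinity, while the linear elliptic equation satisfied by $Z_1$ forces $Z_1(x) \leq C e^{-\sqrt{1+\lambda_1}\,|x|}$ with $\sqrt{1+\lambda_1} > 1$, the product $|\varepsilon Z_1(x) u_1(y)|$ is strictly dominated by $w_m(x)$ for $|\varepsilon|$ small, uniformly in $y$, using the uniform boundedness of $u_1$. Provided the remainder $o(\varepsilon)$ is controlled in a weighted $L^\infty$ norm with decay rate comparable to that of $w_m$, one obtains $u_\varepsilon > 0$ on all of $\mathbb{R}^{m+n}$.

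The improved thresholds $n \geq 5$ and $p \geq (n+3)/(n-1)$ should follow by retracing the proof of Theorem~\ref{main0} under the simplification $k=1$. In the general case one must estimate cross-interactions $Z_i Z_j u_i u_j$ between distinct Helmholtz profiles in the weighted spaces tailored to the decay $(1+|y|)^{-(n-1)/2}$, and the corresponding embeddings force $n \geq 6$ together with the critical-exponent restriction. With only the ground state mode, the nonlinear remainder contains just the self-interaction $\varepsilon^2 Z_1^2 u_1^2$ and the higher-order term $|\varepsilon Z_1 u_1|^p$; moreover positivity of $u_\varepsilon$ replaces $|u|^{p-1}u$ by $u^p$, eliminating the sign-juggling required in the general case. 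The net effect is that the required weighted Sobolev embedding becomes available one dimension and one unit of $p$ earlier.

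The principal obstacle is the solvability of the linear correction equation after projecting away the single mode $Z_1(x)u_1(y)$. The model operator $-\Delta_y + L_{w_m}$ on the orthogonal complement still has bottom of essential spectrum at zero through the translation kernel of $L_{w_m}$ coupled to $-\Delta_y$, so a limiting-absorption or weighted-resolvent estimate of Agmon type, with weights matching $(1+|y|)^{-(n-1)/2}$, is required to construct a right inverse with operator norm independent of $\varepsilon$. Matching the weighted decay of the nonlinear remainder to the smoothing afforded by this inverse is precisely what pins down the condition $p \geq (n+3)/(n-1)$.
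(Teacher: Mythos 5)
Your general framework---specialize the construction of Theorem~\ref{main0} to $\omega_m=w_m$ with $k=1$ and run the same fixed-point scheme---is the right one and matches the paper. But you misidentify the mechanism that produces the improved thresholds $n\geq 5$, $p\geq\frac{n+3}{n-1}$. The paper does not get them by ``eliminating cross-interactions'' between modes or by replacing $|u|^{p-1}u$ with $u^p$ (the latter is circular: positivity is what one is trying to show, not an input). The entire improvement comes from a different choice inside Lemma~\ref{Sobolev}: the Kenig--Ruiz--Sogge inequality is valid for exponents $q\in\bigl[\frac{2(n+1)}{n-1},\frac{2n}{n-2}\bigr]$, Theorem~\ref{main0} uses the upper endpoint $q=\frac{2n}{n-2}$ (hence $\bar p\geq q-1=\frac{n+2}{n-2}$, forcing $n\geq 6$), while the Corollary picks the \emph{lower} endpoint $\alpha=\frac{2(n+1)}{n-1}$, so that $\bar p=\min\{p,2\}\geq\alpha-1=\frac{n+3}{n-1}$ holds precisely when $n\geq 5$ and $p\geq\frac{n+3}{n-1}$. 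That is the whole computation; nothing about the algebra of the nonlinearity changes.

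There is a second, more substantive gap. You write that $-\Delta_y+L_{w_m}$ on the complement of $Z_1(x)u_1(y)$ ``still has bottom of essential spectrum at zero through the translation kernel,'' and that an Agmon-type limiting-absorption estimate is therefore needed to invert it. This misses how the paper removes the translation kernel of $L_{w_m}$: the perturbation $\Phi$ is required to be \emph{radially symmetric in $x$}. Since $\ker L_{w_m}=\mathrm{span}\{\partial_{x_1}w_m,\dots,\partial_{x_m}w_m\}$ and these are odd, they are killed by the radial restriction; combined with orthogonality to $Z_1$, one has $L_{w_m}\geq c>0$ on the remaining subspace, hence $-\Delta_y+L_{w_m}\geq c>0$ with an ordinary $L^2$-bounded inverse (Lemma~\ref{Linearsol}). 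No limiting absorption is needed for the $\Phi$-equation; the limiting-absorption operator $\mathfrak{D}_1=\lim_{\varepsilon\to 0}(-\Delta-\lambda_1+i\varepsilon)^{-1}$ enters only in the separate reduced equation for $h$ on $\mathbb{R}^n$. This radial restriction is also exactly what the paper means by ``$w_m$ is nondegenerate in a certain sense,'' and it is why the sharper endpoint of the KRS range can be exploited for $w_m$ but not for a general bound state $\omega_m$. Your pointwise positivity comparison (exponential decay $e^{-\sqrt{1+\lambda_1}|x|}$ of $Z_1$ versus $e^{-|x|}$ of $w_m$, with uniform smallness of the remainder) is a reasonable sketch and fills in a point the paper leaves implicit, though one must also check that $\Phi$ and $Z_1 h$ are dominated by $w_m$ in the same weighted $L^\infty$ sense.
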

Observe that in this corollary, we allow $n=5$. This is partly due to the fact that $w_m$ is nondegenerate in certain sense.

\begin{remark}
We don't know the decay rates of these solutions to $\omega_m$ as $y$ goes to infinity.
\end{remark}
When we are considering the existence of solutions radially symmetric in the
$y$ variable, the requirement that $p\geq\frac{n+3}{n-1}$ could be slightly
relaxed. This is the content of our next theorem.

\begin{theorem}
\label{main1}Let $n\geq$ $4$ and $p>\frac{n+1}{n-1}.$ For any $\varepsilon$
with $\left\vert \varepsilon\right\vert $ small enough, there is a positive
solution $u_{\varepsilon}$ to $\left(  \ref{Sch}\right)  $ which is radially
symmetric in the $y$ variable and
\[
u_{\varepsilon}\left(  x,y\right)  =w_m\left(  x\right)  +\varepsilon
J_{n,0}\left( \sqrt{\lambda_1} \left\vert y\right\vert \right)  +o\left(  \varepsilon\right)
.
\]

\end{theorem}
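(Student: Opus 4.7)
The plan is a Lyapunov--Schmidt reduction carried out in the subspace of functions that are radial in $y$. Write the ansatz
\[
u_\varepsilon(x,y) = w_m(x) + \varepsilon\, Z_1(x)\, J_{n,0}\bigl(\sqrt{\lambda_1}\,|y|\bigr) + \phi(x,y),
\]
with $\phi$ required to be radial in $y$. Because $L_{w_m} Z_1 = -\lambda_1 Z_1$ and $\Delta_y J_{n,0}(\sqrt{\lambda_1}|y|) = -\lambda_1 J_{n,0}(\sqrt{\lambda_1}|y|)$, separation of variables shows that $Z_1(x) J_{n,0}(\sqrt{\lambda_1}|y|)$ lies in the kernel of the full linearized operator
\[
\mathcal{L} := -\Delta_{x,y} + 1 - p\, w_m^{p-1}(x).
\]
Substituting the ansatz reduces the equation to
\[
\mathcal{L}\phi = \varepsilon^2 R_0 + N(\varepsilon,\phi),
\]
where $R_0$ is an explicit quadratic source and $N(\varepsilon,\phi) = O(\varepsilon\phi) + O(\phi^2)$ collects the higher-order nonlinear remainder coming from expanding $|w_m+\varepsilon Z_1 J_{n,0}+\phi|^{p-1}(w_m+\varepsilon Z_1 J_{n,0}+\phi)$.

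To invert $\mathcal{L}$, I would expand in the $L^2(\mathbb{R}^m)$-spectral basis of $L_{w_m}$: $\phi(x,y) = \sum_{j=1}^{l} c_j(y) Z_j(x) + \phi^{\perp}(x,y)$, where $\phi^\perp$ is orthogonal (in $x$) to all nonpositive eigenfunctions. The problem then decouples into three kinds of scalar problems on $\mathbb{R}^n$: (i) the Helmholtz equation $(\Delta_y + \lambda_1) c_1 = g_1$ on radial functions; (ii) Helmholtz-type equations $(\Delta_y + \lambda_j)c_j = g_j$ for the remaining negative eigenvalues $j=2,\dots,k$ and Laplace equations for zero eigenvalues; and (iii) a coercive equation for $\phi^\perp$ for which a bounded inverse in a weighted $L^\infty$ is standard because $L_{w_m}$ is strictly positive on the complement. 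Step (i) is the delicate one: on the radial sector the kernel of $\Delta_y + \lambda_1$ in the natural weighted class is one-dimensional, spanned by $J_{n,0}(\sqrt{\lambda_1}|y|)$, and I would absorb the Fredholm obstruction
\[
\int_{\mathbb{R}^n} g_1(y)\, J_{n,0}\bigl(\sqrt{\lambda_1}|y|\bigr)\, dy = 0
\]
into the reduction by freely adjusting the scalar coefficient of $J_{n,0}$ in the ansatz. Using the explicit Bessel representation of the radial Helmholtz resolvent then produces a bounded $c_1$ obeying the decay $|c_1(y)| \le C(1+|y|)^{-(n-1)/2}$, which is exactly the generic rate for Helmholtz solutions.

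The fixed-point argument is carried out in the radial-in-$y$ weighted space
\[
\|\phi\|_{*} = \sup_{(x,y)\in\mathbb{R}^{m+n}} e^{\sigma|x|}\,(1+|y|)^{(n-1)/2}\,|\phi(x,y)|
\]
for some $\sigma<1$. The hypothesis $p > (n+1)/(n-1)$ enters precisely in the nonlinear estimate: the quadratic part of $N(\varepsilon,\phi)$ is controlled by $w_m^{p-2}(\varepsilon Z_1 J_{n,0} + \phi)^2$, whose in-$y$ decay is $(1+|y|)^{-(n-1)}$, and the radial Helmholtz inverse must return a function of size $\varepsilon$ in the $\|\cdot\|_*$ norm. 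The restriction to the radial sector lets the resolvent act as a convolution against a Bessel kernel with optimal decay, and balancing the gain against the loss in the nonlinear expansion is exactly the inequality $(p-1)\cdot\frac{n-1}{2}>1$, i.e.\ $p > (n+1)/(n-1)$. Once the contraction is established, positivity of $u_\varepsilon$ follows from $w_m>0$ and $\|\phi\|_* = o(1)$, while radiality in $y$ is preserved by the iteration because every operator appearing commutes with the $y$-rotation group.

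The main obstacle is the linear theory on the Helmholtz component, since $\Delta_y + \lambda_1$ has essential spectrum touching zero and $\mathcal{L}$ is not Fredholm on $L^2(\mathbb{R}^{m+n})$; the usual non-degeneracy argument for Lyapunov--Schmidt breaks down. The way out is to leave $L^2$ entirely and work in the weighted radial space where the resolvent is controlled by the sharp Bessel asymptotics, modding out the single direction $J_{n,0}(\sqrt{\lambda_1}|y|)$. This radial restriction is also what allows the exponent to be relaxed from $p\ge (n+3)/(n-1)$ in Corollary~\ref{main} to $p>(n+1)/(n-1)$: only the zeroth spherical harmonic in $y$ is seen by the problem, so the resolvent enjoys an extra half-power of decay in the mapping estimate, and the dimensional threshold drops correspondingly from $n\ge 5$ to $n\ge 4$.
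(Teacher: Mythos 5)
Your overall plan --- decompose in the $L^2_x$-spectral basis of $L_{w_m}$, reduce the $Z_1$-component to a radial Helmholtz ODE in $|y|$, invert the coercive complement in a weighted space, and close by contraction --- is the same as the paper's. The key step, however, contains a genuine error. You assert that the radial Helmholtz problem $(\Delta_y+\lambda_1)c_1=g_1$ carries a Fredholm obstruction $\int_{\mathbb{R}^n} g_1\, J_{n,0}(\sqrt{\lambda_1}|y|)\,dy=0$ which you would ``absorb'' by adjusting the coefficient of $J_{n,0}$ in the ansatz. There is no such obstruction. In the weighted radial class in which $c_1$ is permitted the generic Bessel decay $(1+|y|)^{-(n-1)/2}$, the radial operator $\partial_r^2+\frac{n-1}{r}\partial_r+\lambda_1$ is surjective onto sources decaying like $(1+|y|)^{-\beta}$ for any $\beta>\frac{n+1}{2}$: the explicit variation-of-parameters formula
\[
h(r)=N_n(r)\int_0^r J_n(s)\eta(s)s^{n-1}\,ds - J_n(r)\int_0^r N_n(s)\eta(s)s^{n-1}\,ds
\]
yields a solution regular at the origin with $\sup_r(1+r)^{(n-1)/2}|h(r)|\le C\sup_r(1+r)^{\beta}|\eta(r)|$, and no orthogonality to $J_{n,0}$ is required. (The kernel in this space is nontrivial, spanned by $J_{n,0}$, but for the fixed-point argument one only needs a bounded right inverse, which the formula provides.) This is exactly the paper's unnumbered ODE lemma in Section~3; there is no free scalar to tune, and even if the projection $\int g_1 J_{n,0}\,dy$ were an honest obstruction it is quadratic rather than linear in the coefficient you propose to adjust, so tuning it would not help.

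A secondary remark: your balance $(p-1)\frac{n-1}{2}>1$ does reproduce the threshold $p>\frac{n+1}{n-1}$ when $1<p<2$, but for $p\ge 2$ the nonlinear remainder is controlled only by $|\cdot|^{\bar p}$ with $\bar p=\min\{p,2\}$, and it is the saturated inequality $\bar p\,\frac{n-1}{2}>\frac{n+1}{2}$ that forces $n\ge 4$. Your derivation would incorrectly suggest that $n=3$ with $p>2$ is admissible, which the paper's hypotheses rule out.
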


When $p$ is an integer, using the same method, we could obtain similar result for $n=$ $3.$

\begin{theorem}
\label{main1'}Let $n=$ $3$ and $p>1$ be an integer. For any $\varepsilon$ with
$\left\vert \varepsilon\right\vert $ small enough, there is a positive
solution $u_{\varepsilon}$ to $\left(  \ref{Sch}\right)  ,$ radially symmetric
in the $y,$ such that
\[
u_{\varepsilon}\left(  x,y\right)  =w_m\left(  x\right)  +\varepsilon
J_{n,0}\left( \sqrt{\lambda_1} \left\vert y\right\vert \right)  +o\left(  \varepsilon\right)
.
\]

\end{theorem}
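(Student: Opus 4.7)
The plan is to adapt the Lyapunov--Schmidt / Banach fixed-point scheme from the proof of Theorem \ref{main1} to the borderline dimension $n=3$, with the integrality of $p$ compensating for the slow $|y|^{-1}$ decay of radial Helmholtz solutions in $\R^3$. First, take the ansatz
\[
u_\varepsilon(x,y) = w_m(x) + \varepsilon Z_1(x)\, \psi(y) + \varepsilon^2 \eta(x,y), \qquad \psi(y):=J_{3,0}\!\left(\sqrt{\lambda_1}|y|\right),
\]
with $\eta$ radial in $y$ and to be found in a weighted space. Since $L_{w_m}Z_1 = -\lambda_1 Z_1$ and $(-\Delta_y-\lambda_1)\psi = 0$, the first-order correction lies in the kernel of the linearized operator $\mathcal L := -\Delta_{x,y}+1-pw_m^{p-1}(x)$. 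Using $p \in \mathbb{N}$, expand $(w_m+\varepsilon Z_1\psi+\varepsilon^2\eta)^p$ as a \emph{finite} polynomial; combined with $-\Delta w_m+w_m-w_m^p=0$, this yields
\[
\mathcal L \eta = \binom{p}{2}\, w_m^{p-2}(x)\, Z_1^2(x)\, \psi^2(y) + \varepsilon\, \mathcal R_\varepsilon(\eta),
\]
where $\mathcal R_\varepsilon$ collects the remaining monomials in $(\varepsilon Z_1\psi+\varepsilon^2\eta)$.

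Next, invert $\mathcal L$ on radial-in-$y$ functions by mode decomposition in $x$: write $\eta = a_1(y)Z_1(x) + \sum_{j=2}^{m+1} a_j(y)Z_j(x) + \eta^\perp$, with $Z_2,\dots,Z_{m+1}$ spanning the translational kernel of $L_{w_m}$ and $\eta^\perp$ in the strictly positive spectral subspace. The projected equations are: $(-\Delta_y-\lambda_1)a_1=F_1$, a radial Helmholtz problem that I would invert via a limiting-absorption-type right inverse built from $\{J_{3,0},Y_{3,0}\}$ on $(0,\infty)$, producing solutions with decay $\lesssim (1+|y|)^{-1}$ when $F_1$ decays strictly faster than $|y|^{-1}$ (any resonant amplitude on $\psi$ being absorbed into the normalization of $\varepsilon$); $-\Delta_y a_j = F_j$ for $j\geq 2$, solved by the radial Newton potential in $\R^3$; and a uniformly coercive problem for $\eta^\perp$, invertible on an $L^\infty_y(H^2_x)$-type space.

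Then set up the Banach contraction for $\eta$ in a weighted space $\mathcal X$ measuring $(1+|y|)\eta$ together with $x$-regularity (radial $y$-symmetry imposed a priori). The leading source $\binom{p}{2}w_m^{p-2}Z_1^2\psi^2$ has exponential decay in $x$ and $|y|^{-2}$ in $y$; since $|y|^{-2}$ is strictly better than the target $|y|^{-1}$, its $\mathcal L^{-1}$-image has $O(1)$ norm in $\mathcal X$. Each term in $\varepsilon\mathcal R_\varepsilon(\eta)$ carries either an extra factor of $\varepsilon$ or at least $\psi^{\geq 2}$ decay, so the full right-hand side defines a contraction on a small ball of $\mathcal X$ for $|\varepsilon|$ sufficiently small; standard iteration yields $\eta$. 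Positivity of $u_\varepsilon$ follows from $w_m>0$ and the smallness of the perturbation.

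The main obstacle is the borderline matching between Helmholtz inversion and nonlinearity in $n=3$: both produce/demand decay $|y|^{-1}$, and the iteration requires sources with strictly more decay. The integrality of $p$ is exactly what saves the argument — the finite polynomial expansion forces every nonzero monomial in $\mathcal R_\varepsilon$ to involve $\psi^k$ with $k\geq 2$, i.e.\ decay $|y|^{-2}$ or better, which is precisely the gain absorbed by $\mathcal L^{-1}$. A non-integer $p$ would produce remainders of order $|u-w_m|^{1+\delta}$ with fractional $\delta\in(0,1)$, yielding decay only $|y|^{-(1+\delta)}$ and failing to close the iteration at $n=3$; this is why the hypothesis $p\in\mathbb{N}$ is essential in this dimension.
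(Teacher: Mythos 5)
Your overall skeleton (Lyapunov--Schmidt / fixed-point, mode decomposition in $x$, radial Helmholtz inversion in $y$) matches the paper's strategy, but there is a genuine gap at the key borderline estimate, and your diagnosis of the role of $p\in\mathbb{N}$ is consequently off the mark.

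You claim that in $\R^3$ the radial Helmholtz inversion $(-\Delta_y-\lambda_1)a_1=F_1$ produces a solution with $|y|^{-1}$ decay whenever $F_1$ decays strictly faster than $|y|^{-1}$, so that a $|y|^{-2}$ source is automatically fine. This is false at the critical rate $|y|^{-2}$. Writing the variation-of-parameters solution
\[
h(r)=N(r)\int_0^r J(s)F_1(s)s^2\,ds + J(r)\int_r^\infty N(s)F_1(s)s^2\,ds,
\]
with $J(s)\sim s^{-1}\cos(\sqrt{\lambda_1}s-\zeta)$, $N(s)\sim s^{-1}\sin(\sqrt{\lambda_1}s-\zeta)$, a source $F_1(s)\sim s^{-2}$ makes the first integrand $\sim s^{-1}\cos(\sqrt{\lambda_1}s-\zeta)$, i.e.\ borderline $s^{-1}$; the integral only converges because the cosine oscillates. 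If instead $F_1$ contains a resonant component $s^{-2}\cos(\sqrt{\lambda_1}s-\zeta)$, then $J(s)F_1(s)s^2\sim s^{-1}\cos^2(\sqrt{\lambda_1}s-\zeta)=\tfrac12 s^{-1}+\tfrac12 s^{-1}\cos(2\sqrt{\lambda_1}s-2\zeta)$, and the first term contributes $\tfrac12\log r$; the solution then behaves like $r^{-1}\log r$ and falls out of the fixed-point space. So ``$|y|^{-2}$, strictly better than $|y|^{-1}$'' is not the right criterion: it is exactly the threshold for Lemma in the $n\geq4$ argument, $\beta>\tfrac{n+1}{2}=2$, and the plain decay lemma fails at $\beta=2$.

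What actually saves $n=3$ is the \emph{non-resonant trigonometric structure} of the $r^{-2}$ part of the source, which is the content of the paper's Lemma~\ref{lemmah}: the sources arising in the iteration are of the form
\[
\rho(r)\,r^{-2}\bigl[k_1\sin^2(\sqrt{\lambda}r-\zeta)+k_2\cos^2(\sqrt{\lambda}r-\zeta)+k_3\sin(\sqrt{\lambda}r-\zeta)\cos(\sqrt{\lambda}r-\zeta)\bigr]+O(r^{-3}),
\]
which contain only the frequencies $0$ and $2\sqrt{\lambda}$ at the critical $r^{-2}$ order --- never the resonant frequency $\sqrt{\lambda}$. After multiplying by $J(s)$ or $N(s)$ one obtains frequencies $\sqrt{\lambda}$ and $3\sqrt{\lambda}$, all nonzero, so every piece of $\int_0^r J F_1 s^2\,ds$ converges conditionally without a logarithm, and the solution lands back in the space $c_1 r^{-1}\sin(\sqrt{\lambda}r-\zeta)+O(r^{-2})$, which is stable under the nonlinearity (products of $\cos$ and $\sin$ at frequency $\sqrt{\lambda}$ again produce only $0$ and $2\sqrt{\lambda}$). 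This is what $p\in\mathbb{N}$ buys: the nonlinearity is a finite polynomial in $\varepsilon J+h+\Phi$, so the $r^{-2}$ coefficient is an honest quadratic form in $\{\cos,\sin\}$ at frequency $\sqrt{\lambda}$, hence non-resonant. It is not merely that the decay is $|y|^{-2}$, as your write-up suggests; the structure matters. Your parenthetical remark about absorbing resonant $\psi$-amplitude into the normalization of $\varepsilon$ addresses a $J$-component of the solution, which is harmless, but does nothing about a growing coefficient of $N$, which is where the log actually appears.

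A smaller point: you decompose the error over $Z_1$, the $m$ translational zero modes, and $\eta^\perp$. Since $Z_1$ and $w_m$ are radial in $x$, the paper instead works in the class of functions radially symmetric in $x$, where the translational modes vanish identically and $L$ restricted to $\{Z_1\}^\perp$ is invertible outright; no Newton-potential step is needed. Either decomposition can work, but the radial reduction is cleaner and is the route the paper takes.
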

\begin{remark}
An open question is  the case of $n=2$. We don't know whether or not there are similar solutions. We believe that our method of proof for these theorems could potentially be used in other settings.
\end{remark}

We will use contraction mapping principle to prove these results.  The conditions on $p$ and $n$ are used to ensure the contraction mapping properties.  In Section
2, we prove Theorem \ref{main'} and sketch the proof of Corollary \ref{main}. In
section 3, we prove Theorem \ref{main1} and Theorem \ref{main1'}.

Throughout the paper, we use $C$ to denote a general constant which may vary
from step to step.

\medskip

\textit{Acknowledgement.}
The research of J. Wei is partially supported by NSERC of Canada.
Part of the paper was finished while Y. Liu was visiting the University of British Columbia in 2016.
He appreciates the institution for its hospitality and financial support.

\section{Proof of Theorem \ref{main0} and Theorem \ref{main}}
We first prove Theorem \ref{main0}. For each $\varepsilon$ with $\left\vert \varepsilon\right\vert $ small enough,
we will construct a solution $u_{\varepsilon}$ to $\left(  \ref{Sch}%
\right)  $ in the form:
\[
u_{\varepsilon}\left(  x,y\right)  =\omega_m\left(  x\right)  +\sum_{i=1}^{l}Z_i\left(  x\right)
f_i\left(  y\right)  +\Phi\left(  x,y\right)  ,
\]
where we require that $\Phi$ is
orthogonal to $Z_i$ in the following sense:
\[
\int_{\mathbb{R}^m}\Phi\left(  x,y\right)  Z_i\left(  x\right)  dx=0,\text{ for any }y,i.
\]
We obtain that the equations satisfied by $f_i$ and $\Phi$ are%
\[
-\Delta\Phi+\left(  1-p\left\vert \omega_{m}\right\vert ^{p-1}\right)  \Phi+\left[  -\Delta f_i-\lambda
_{i}f_i\right]  Z_i\left(  x\right)  =N\left(  f,\Phi\right)  .
\]
Here\[
N\left(  f,\Phi\right)  =\left\vert u_{\varepsilon}\right\vert ^{p-1}%
u_{\varepsilon}-\left\vert \omega_{m}\right\vert ^{p-1}\omega_{m}-p\left\vert
\omega_{m}\right\vert ^{p-1}\left(
{\displaystyle\sum\limits_{i=1}^{l}}
Z_{i}\left(  x\right)  f_{i}\left(  y\right)  +\Phi\right)  .
\]

Introduce the constant $\bar{p}:=\min\left\{  p,2\right\}  .$ We have the
following estimate for $N:$%
\begin{equation}\label{N}
\left\vert N\left(  f,\Phi\right)  \right\vert \leq C\left(
{\displaystyle\sum\limits_{i=1}^{l}}
\left\vert f_{i}\right\vert ^{\bar{p}}+\left\vert \Phi\right\vert ^{\bar{p},
}\right)
\end{equation}
provided that $\left\vert f_i\right\vert $ and $\left\vert \Phi\right\vert $ are small.

Let $f_i\left(  y\right)  =\varepsilon u_{i}\left(  \left\vert y\right\vert
\right)  +h_i\left(  y\right)  $, where for $i=1,..., k$, $\lambda_i>0$ and $u_i$ satisfies
\begin{equation}
\Delta u_i +\lambda_i u_i=0 \ \mbox{in} \ \R^n,  \ |u_i|\leq C (1+|y|)^{-\frac{n-1}{2}}
\end{equation}
For $i=k+1,..., l$, $ \lambda_i=0$ and $u_i \equiv 0$.

 Then we need to solve%
\begin{equation}
-\Delta\Phi+\left(  1-p\left\vert \omega_{m}\right\vert ^{p-1}\right)  \Phi+\left[  -\Delta h_i-\lambda
_{i}h\right]  Z_i\left(  x\right)  =N\left(  f,\Phi\right)  .
\label{eq1}%
\end{equation}
For the sake of convenience, we introduce the notation
\[
L\Phi:=-\Delta\Phi+\left(  1-p\left\vert \omega_{m}\right\vert ^{p-1}\right)  \Phi.
\]
To get a solution for $\left(  \ref{eq1}\right)  ,$ it will be sufficient to deal
with the system
\begin{equation}
\left\{
\begin{array}
[c]{l}
-\Delta h_i-\lambda_{i}h_i=\int_{\mathbb{R}^m}\left[  Z_i\left(  x\right)  N\left( f,\Phi\right)  \right]  dx, i=1,...,l,\\
L\Phi=N\left(f,\Phi\right)  -\sum_{i=1}^{l}Z_i\left(  x\right)  \int%
_{\mathbb{R}^m}\left[  Z_i\left(  x\right)  N\left(  f,\Phi\right)
\right]  dx.
\end{array}
\right.  \label{sys}%
\end{equation}
Throughout the paper, we use $q^{\prime}$ to denote the conjugate exponent of
$q.$ That is,
\[
q^{\prime}=\frac{q}{q-1}.
\]
We need the following  important generalized Sobolev type inequality (Theorem 2.3 in
\cite{KRS}).

\begin{lemma}
\label{Sobolev}Suppose the exponent $q$ satisfies
\begin{equation}
\frac{2}{n+1}\leq\frac{1}{q^{\prime}}-\frac{1}{q}\leq\frac{2}{n}. \label{qq'}%
\end{equation}
Then
\begin{equation}
\left\Vert \Phi\right\Vert _{L^{q}\left(  R^{n}\right)  }\leq C\left\Vert
\Delta\Phi+\Phi\right\Vert _{L^{q^{\prime}}\left(  R^{n}\right)  }.
\label{ine}%
\end{equation}

\end{lemma}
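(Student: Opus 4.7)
The plan is to represent $\Phi$ via the Fourier transform as $\Phi = \mathcal{F}^{-1}\bigl( (1-|\xi|^2)^{-1} \widehat{f}\,\bigr)$ with $f := \Delta\Phi + \Phi$, and to establish the mapping property of the operator
$$ T : L^{q'}(\R^n) \to L^q(\R^n), \qquad Tf := \mathcal{F}^{-1}\!\bigl((1-|\xi|^2 + i0)^{-1} \widehat{f}\,\bigr), $$
where $T$ is defined via the limiting absorption principle. The symbol $(|\xi|^2-1)^{-1}$ is singular on the characteristic sphere $\{|\xi|=1\}$, and taming this singularity is the essence of the argument.

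First I would decompose the multiplier as $(1-|\xi|^2)^{-1} = m_0(\xi) + m_1(\xi)$, where $m_1$ is smoothly localized in a thin annulus $\{\,\bigl||\xi|-1\bigr|<\delta\,\}$ around the characteristic sphere and $m_0$ is supported in the complement. The symbol $m_0$ is smooth, bounded near the origin, and behaves like $|\xi|^{-2}$ at infinity, so the operator $\mathcal{F}^{-1}(m_0\,\widehat{f}\,)$ is, up to a bounded convolution term, the Riesz potential of order $2$. The Hardy--Littlewood--Sobolev inequality then yields
$$ \bigl\|\mathcal{F}^{-1}(m_0 \widehat{f}\,)\bigr\|_{L^q} \leq C \|f\|_{L^{q'}} $$
precisely in the scaling range $\tfrac{1}{q'}-\tfrac{1}{q} \leq \tfrac{2}{n}$, which accounts for the upper bound in \eqref{qq'}.

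For the singular piece $m_1$, the plan is to invoke the Stein--Tomas restriction theorem. In polar coordinates $\xi = r\omega$ with $r$ close to $1$, split $m_1$ into the imaginary part (a $\delta$-type distribution on $S^{n-1}$) and a principal-value part. The imaginary part corresponds, up to constants, to the composition of the Fourier extension operator on $S^{n-1}$ with its adjoint; the Stein--Tomas theorem gives the $L^{q'}\to L^q$ bound exactly when $\tfrac{1}{q'}-\tfrac{1}{q} \geq \tfrac{2}{n+1}$, which is the lower bound in \eqref{qq'}. The principal-value part I would handle by a dyadic decomposition on shells $\{\bigl||\xi|-1\bigr| \sim 2^{-k}\}$, applying Stein--Tomas on each shell after rescaling and summing.

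The main obstacle will be the principal-value piece: naive summation over dyadic shells loses a logarithmic factor at the endpoint $\tfrac{1}{q'}-\tfrac{1}{q} = \tfrac{2}{n+1}$. To overcome this, I would follow the Kenig--Ruiz--Sogge strategy and embed $T$ into an analytic family of operators $T_z$ (obtained by replacing $(1-|\xi|^2+i0)^{-1}$ with $(1-|\xi|^2+i0)^{-z}$), use Stein's interpolation theorem between trivial $L^2$ bounds on one line and $L^1 \to L^\infty$ bounds on another, and recover the borderline estimate by complex interpolation rather than by summation. This gives the full range \eqref{qq'} and completes the proof.
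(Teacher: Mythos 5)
The paper does not prove this lemma; it simply cites it as Theorem~2.3 of Kenig--Ruiz--Sogge \cite{KRS}, so there is no in-paper argument to compare against. Your sketch is, at this level of detail, an accurate outline of the proof that actually appears in \cite{KRS}: split the multiplier $(1-|\xi|^2+i0)^{-1}$ into a piece supported away from the characteristic sphere (Riesz-potential-like, handled by Hardy--Littlewood--Sobolev, responsible for the constraint $\frac{1}{q'}-\frac{1}{q}\leq\frac{2}{n}$), a surface-measure piece on $S^{n-1}$ (handled by Stein--Tomas, responsible for $\frac{1}{q'}-\frac{1}{q}\geq\frac{2}{n+1}$), and a principal-value piece near the sphere that one cannot treat by naive dyadic summation at the endpoint; the analytic family $(1-|\xi|^2+i0)^{-z}$ together with Stein interpolation is indeed the device Kenig--Ruiz--Sogge use to close this. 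So there are no gaps in your outline of the strategy, only technical details to be filled in.

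One point deserves emphasis and is worth making explicit: as literally written, the inequality \eqref{ine} fails for general $\Phi$, since adding to $\Phi$ any tempered solution of the homogeneous Helmholtz equation (for example $J_{n,0}(\lvert y\rvert)$) leaves the right-hand side unchanged. The estimate is really the statement that the outgoing resolvent $(-\Delta-1-i0)^{-1}$ is bounded from $L^{q'}(\R^n)$ to $L^{q}(\R^n)$, and that is precisely how the paper later uses it, through the operator $\mathfrak{D}_j=\lim_{\varepsilon\to 0^+}(-\Delta-\lambda_j+i\varepsilon)^{-1}$. You already anticipate this by defining $T$ via the limiting absorption principle, but the discrepancy with the form in which the lemma is stated should be flagged rather than passed over silently.
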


Note that if $u\in L^{q},$ then $u^{q-1}\in L^{q^{\prime}}.$ Inequality
$\left(  \ref{qq'}\right)  $ is equivalent to
\[
\frac{n-2}{n}\leq\frac{2}{q}\leq\frac{n-1}{n+1}.
\]
That is,
\[
\frac{2\left(  n+1\right)  }{n-1}\leq q\leq\frac{2n}{n-2}.
\]
Recall that $\bar{p}=\min\left\{  p,2\right\}  .$ Hence under the assumption
that $n\geq 6$ and $p\geq \frac{n+2}{n-2}$, we have
\[
\bar{p}\geq\frac{2n  }{n-2}-1=\frac{n+2}{n-2}.
\]

To solve $\left(  \ref{sys}\right)  ,$we first consider the solvability of the
equation
\begin{equation}
L\Phi=\xi\label{linear1}%
\end{equation}
for given function $\xi.$ For this purpose, we introduce the functional space
to work with.

\begin{definition}
The space $E_{\alpha}$ consists of functions $\xi$ defined on $\mathbb{R}^{m+n}$
satisfying
\[
\left\Vert \xi\right\Vert _{\ast,\alpha}:=\left\Vert \xi\right\Vert
_{L^{\alpha}\left(  \mathbb{R}^{m+n}\right)  }+\left\Vert \xi\right\Vert _{L^{\infty
}\left(  \mathbb{R}^{m+n}\right)  }<+\infty.
\]
The space $\bar{E}_{\alpha}$ consists of $l$-tuple of functions $\eta=(\eta_1,...,\eta_l)$ with $\eta_i$ defined on $\mathbb{R}^{n}$,
satisfying
\[
\left\Vert \eta\right\Vert _{\ast\ast,\alpha}:=\Sigma_{i=1}^{l}\left\Vert \eta_i\right\Vert
_{L^{\alpha}\left(  \mathbb{R}^{n}\right)  }+\Sigma_{i=1}^{l}\left\Vert \eta_i\right\Vert _{L^{\infty
}\left(  \mathbb{R}^{n}\right)  }<+\infty.
\]

\end{definition}

We will choose $q$ to be $\frac{2n  }{n-2}.$ Then
\[
q^{\prime}=\frac{q}{q-1}=\frac{2n  }{n+2}.
\]

\begin{lemma}
\label{Linearsol}Suppose $\left\Vert
\xi\right\Vert _{\ast,q^{\prime}}<+\infty$ and
\[
\int_{\mathbb{R}^m}\xi\left(  x,y\right)  Z_i\left(  x\right)  dx=0,\text{ for any }y, i.
\]
Then the equation $\left(  \ref{linear1}\right)  $ has a solution $\Phi$
satisfying
\[
\left\Vert \Phi\right\Vert _{\ast,q}\leq C\left\Vert \xi\right\Vert
_{\ast,q^{\prime}}%
\]
and
\[
\int_{\mathbb{R}^m}\Phi\left(  x,y\right)  Z_i\left(  x\right)  dx=0,\text{ for any }y, i.
\]

\end{lemma}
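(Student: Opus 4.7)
The plan is to reduce the equation $L\Phi=\xi$ to a spectral/Fourier problem adapted to the splitting $L=-\Delta_y+L_{\omega_m}$, where $L_{\omega_m}=-\Delta_x+(1-p|\omega_m|^{p-1})$ acts only on the $x$-variable. Let $P$ denote the $L^2(\mathbb{R}^m)$-orthogonal projection onto $\mathrm{span}\{Z_1,\ldots,Z_l\}$. Because the $Z_j$ span exactly the non-positive spectrum of $L_{\omega_m}$, the restriction $L_{\omega_m}|_{(I-P)L^2}$ has spectrum contained in $[\delta,\infty)$ for some $\delta>0$. The hypothesis $P\xi(\cdot,y)=0$ for every $y$ places $\xi$ in an invariant subspace on which
$$L|_\perp=-\Delta_y+L_{\omega_m}|_\perp\geq \delta>0,$$
and the orthogonality requirement on $\Phi$ amounts to working in this same subspace.

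First I would produce the solution by Fourier inversion in $y$, setting
$$\hat\Phi(x,\eta)=(|\eta|^2+L_{\omega_m}|_\perp)^{-1}\hat\xi(x,\eta),$$
or equivalently writing $L|_\perp^{-1}=\int_0^\infty e^{s\Delta_y}\,e^{-sL_{\omega_m}|_\perp}\,ds$ as commuting semigroups. The spectral gap in $x$ gives the operator-norm bound $\|e^{-sL_{\omega_m}|_\perp}\|\leq e^{-\delta s}$, so the representation of $L|_\perp^{-1}\xi$ is, pointwise in $x$-operator norm, a convolution in $y$ with the Bessel kernel $G_\delta$ of $-\Delta+\delta$ on $\mathbb{R}^n$. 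Since $G_\delta(y)\sim|y|^{2-n}$ near the origin and decays exponentially at infinity, the Hardy--Littlewood--Sobolev inequality on $\mathbb{R}^n$ delivers $\|G_\delta\ast f\|_{L^q(\mathbb{R}^n)}\leq C\|f\|_{L^{q'}(\mathbb{R}^n)}$ at precisely the chosen exponents $q=2n/(n-2)$, $q'=2n/(n+2)$. Taking an $L^r_x$ norm before applying HLS in $y$ then yields mixed-norm bounds of the form $\|\Phi\|_{L^q_y L^r_x}\leq C\|\xi\|_{L^{q'}_y L^r_x}$ for appropriate $r$.

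Orthogonality of $\Phi$ is automatic because $P$ commutes with both $-\Delta_y$ and the projected $x$-operator. The $L^\infty$ bound on $\Phi$ comes from standard elliptic regularity (Moser iteration) applied to $L\Phi=\xi$ with bounded coefficients, and interpolating this pointwise bound with the mixed-norm estimate via H\"older supplies the full $\|\Phi\|_{\ast,q}\leq C\|\xi\|_{\ast,q'}$. The principal difficulty is exactly this last upgrade: the Sobolev exponent $q=2n/(n-2)$ is adapted to $\mathbb{R}^n$, whereas the target norm lives on $\mathbb{R}^{m+n}$, so the passage from a mixed-norm bound $L^q_yL^r_x$ to a pure $L^q_{x,y}$ bound is delicate and appears to make essential use of the $L^\infty$ component of $\|\cdot\|_{\ast,q'}$; this is where the dimensional hypothesis $n\geq 6$ genuinely enters the argument.
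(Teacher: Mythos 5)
Your proposal takes a substantially more elaborate route than the paper and, as you yourself flag at the end, leaves a genuine gap precisely where the elaboration creates work. The paper's own proof is strikingly simple and never invokes the generalized Sobolev inequality, the Bessel kernel, HLS, or any Fourier analysis in $y$ at all. It runs as follows: since $q=2n/(n-2)>2$, the conjugate exponent satisfies $q'<2$, so the norm $\|\xi\|_{\ast,q'}=\|\xi\|_{L^{q'}}+\|\xi\|_{L^\infty}$ controls $\|\xi\|_{L^2(\mathbb{R}^{m+n})}$ by interpolation. On the subspace of $H^2(\mathbb{R}^{m+n})$ orthogonal to each $Z_i(x)$ for every $y$, the operator $L=-\Delta+(1-p|\omega_m|^{p-1})$ has a spectral gap, so $0\notin\mathrm{spec}(L)$ and one solves $L\Phi=\xi$ directly in $L^2$ with $\|\Phi\|_{L^2}\leq C\|\xi\|_{L^2}$. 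The orthogonality condition also yields the uniform bound $\|\Phi\|_{L^\infty}\leq C\|\xi\|_{L^\infty}$ by standard elliptic estimates. Interpolating between $L^2$ and $L^\infty$ (legitimate since $2<q<\infty$) gives $\|\Phi\|_{L^q}\leq C\|\Phi\|_{\ast,2}\leq C\|\xi\|_{\ast,q'}$, and the proof is done.

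Two concrete issues with your version. First, the operator $\bigl(-\Delta_y+L_{\omega_m}|_\perp\bigr)^{-1}$ does \emph{not} act in $y$ as convolution with a single Bessel kernel $G_\delta$: the $y$-kernel is operator-valued, of the form $\int_\delta^\infty G_\mu(\cdot)\,dE(\mu)$ where $E$ is the spectral measure of $L_{\omega_m}|_\perp$. Dominating this by $G_\delta$ requires a positivity argument that you have not supplied, and even if it were available it would only give mixed-norm estimates $L^q_yL^r_x$, after which your passage to the pure $L^q(\mathbb{R}^{m+n})$ norm is, as you admit, not carried out. Second, you attribute the restriction $n\geq6$, $p\geq(n+2)/(n-2)$ to this lemma; in fact the lemma holds for any $n$ and any $2<q<\infty$ by the interpolation argument above. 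Those hypotheses are used elsewhere in the paper, namely to guarantee $\bar p\geq q-1$ in the nonlinear estimate for $N(f,\Phi)$ and to apply the Kenig--Ruiz--Sogge inequality to the Helmholtz-type equation for the $h_i$. Replacing the Fourier/semigroup machinery with the plain $L^2$--$L^\infty$ interpolation closes the gap and simplifies the whole proof.
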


\begin{proof}
Note that $q=\frac{2n }{n-2}>2$ and hence $q^{\prime}<2.$ We
then have%
\[
\left\Vert \xi\right\Vert _{L^{2}}\leq C\left\Vert \xi\right\Vert
_{\ast,q^{\prime}}.
\]
Consider the operator $L$ acting on the space of functions in $H^{2}\left(
\mathbb{R}^{m+n}\right)  $ even in $x$ variable and which additionally orthogonal to
$Z_i\left(  x\right)  $ for each $y, i.$ Then $0$ is not in the spectrum of $L$ and
hence we have an even solution $\Phi$ for the equation $L\Phi=\xi,$ with
$\left\Vert \Phi\right\Vert _{L^{2}}\leq C\left\Vert \xi\right\Vert _{L^{2}%
}\leq C\left\Vert \xi\right\Vert _{\ast,q^{\prime}}$ and
\[
\int_{\mathbb{R}^m}\Phi\left(  x,y\right)  Z_i\left(  x\right)  dx=0,\text{ for any }y, i.
\]
On the other hand, since we impose the orthogonality condition on $\Phi,$ we
also have the $L^{\infty}$ bounds for $\Phi,$ that is,
\[
\left\Vert \Phi\right\Vert _{L^{\infty}}\leq C\left\Vert \xi\right\Vert
_{L^{\infty}}\leq C\left\Vert \xi\right\Vert _{\ast,q^{\prime}}.
\]
Therefore, using the fact that $q>2,$ we find that $\left\Vert \Phi\right\Vert
_{L^{q}}\leq C\left\Vert \Phi\right\Vert _{\ast,2}\leq C\left\Vert
\xi\right\Vert _{\ast,q^{\prime}}.$ This finishes the proof.
\end{proof}

With all these estimates at hand, we could use the contraction mapping
principle to prove Theorem \ref{main0}.

\begin{proof}
[Proof of Theorem \ref{main0}]For each $h=(h_1,...,h_l)$ with $\left\Vert h\right\Vert _{\ast\ast
,q}\leq M_{1}\varepsilon^{\bar{p}}$, where $M_{1}$ is a large constant, we
consider the equation
\[
L\Phi=N\left( f,\Phi\right)  -\sum_{i=1}^{l}Z_i\left(  x\right)  \int%
_{\mathbb{R}^m}\left[  Z_i\left(  x\right)  N\left( f,\Phi\right)
\right]  dx.
\]
Note that the right hand side of this equation is automatically orthogonal to
$Z\left(  x\right)  $ for each $y.$ By Lemma \ref{Linearsol}, we could write
it as
\[
\Phi=L^{-1}\left[  N\left( f,\Phi\right)  -\sum_{i=1}^{l}Z_i(x)\int_{\mathbb{R}^m}Z_i\left(
x\right)  N\left( f,\Phi\right)  dx\right]  :=\bar
{N}\left(  h,\Phi\right)  .
\]
 Observe that the function $\left\vert u_{i}\right\vert ^{\bar{p}}$ is in
$L^{q^{\prime}}:$
\[
\int_{\mathbb{R}^{n}}\left\vert u_{i}\right\vert ^{\bar{p}q^{\prime}}dy\leq C\int%
_{0}^{+\infty}\left(  1+r\right)  ^{-\frac{n-1}{2}\frac{n+2}{n-2}%
\frac{2n  }{n+2}}r^{n-1}dr\leq C.
\]

Now suppose $\left\Vert \Phi\right\Vert _{\ast,q}\leq M_{2}\varepsilon
^{\bar{p}},$ where $M_{2}$ is a large constant. Then using the fact that
$\bar{p}\geq q-1,$ we deduce
\begin{align*}
\left\Vert \left\vert \Phi\right\vert ^{\bar{p}}\right\Vert _{L^{q^{\prime}%
}\left(  \mathbb{R}^{m+n}\right)  }  &  =\left(  \int\left\vert \Phi\right\vert
^{\bar{p}q^{\prime}}\right)  ^{\frac{1}{q^{\prime}}}\leq\left(  \left\vert
\Phi\right\vert ^{\left(  \bar{p}-\left(  q-1\right)  \right)  q^{\prime}}%
\int\left\vert \Phi\right\vert ^{\left(  q-1\right)  q^{\prime}}\right)
^{\frac{1}{q^{\prime}}}\\
&  \leq M_{2}\varepsilon^{\bar{p}^{2}}.
\end{align*}
Also we observe that
\begin{align*}
\int_{\mathbb{R}^{n}}\left(  \int_{\mathbb{R}^m}Z_i\left(  x\right)  \left\vert \eta\left(
x,y\right)  \right\vert dx\right)  ^{q^{\prime}}dy  &  \leq\int_{\mathbb{R}^{n}}\left[
\left(  \int_{\mathbb{R}^m}\left[  Z\left(  x\right)  \right]  ^{q}dx\right)  ^{\frac
{1}{q}}\left(  \int_{\mathbb{R}^m}\left\vert \eta\left(  x,y\right)  \right\vert
^{q^{\prime}}dx\right)  ^{\frac{1}{q^{\prime}}}\right]  ^{q^{\prime}}dy\\
&  \leq C\left\Vert \eta\right\Vert _{L^{q^{\prime}}}^{q^{\prime}}.
\end{align*}
This together with $\left(  \ref{N}\right)  $ implies that
\[
\left\Vert N\left( f,\Phi\right)  \right\Vert
_{L^{q^{\prime}}}\leq C\varepsilon^{\bar{p}}+\left(  M_{1}+M_{2}\right)
\varepsilon^{\bar{p}^{2}}.
\]
On the other hand, it follows from direct computation that
\[
\left\Vert N\left( f,\Phi\right)  \right\Vert _{L^{\infty}%
}\leq C\varepsilon^{\bar{p}}+\left(  M_{1}+M_{2}\right)  \varepsilon^{\bar
{p}^{2}}.
\]
We could then check that $\bar{N}$ maps the balls of radius $M_{2}%
\varepsilon^{\bar{p}}$ of $E_{q}$ into itself for $M_{2}$ large enough and
$\varepsilon$ sufficiently small.

Next we show that $\bar{N}$ is a contraction mapping. To see this, for two
function $\Phi_{1},\Phi_{2},$ we compute
\begin{align*}
\left\Vert N\left(  h,\Phi_{1}\right)  -N\left(  h,\Phi_{2}\right)
\right\Vert _{L^{q^{\prime}}}  &  \leq C\varepsilon^{\bar{p}-1}\left\Vert
\Phi_{1}-\Phi_{2}\right\Vert _{L^{q^{\prime}}},\\
\left\Vert N\left(  h,\Phi_{1}\right)  -N\left(  h,\Phi_{2}\right)
\right\Vert _{L^{\infty}}  &  \leq C\varepsilon^{\bar{p}-1}\left\Vert \Phi
_{1}-\Phi_{2}\right\Vert _{L^{\infty}}.
\end{align*}
This in turn will imply that $\bar{N}$ is a contraction map. We then conclude
that it has a unique fixed point in the ball of radius $M_{2}\varepsilon
^{\bar{p}}$, denote it by $\Phi_{h}.$

To solve the system $\left(  \ref{sys}\right)  $, it remains to solve the system of
equations
\[
-\Delta h_j-\lambda_{j}h_j=\int_{\mathbb{R}^m}\left[  Z_j\left(  x\right)  N\left( f,\Phi_{h}\right)  \right]  dx, j=1,...,l.
\]
We write it as
\[
h_j=\mathfrak{D}_j\left(  \int_{\mathbb{R}^m}Z_j\left(  x\right)  N\left( f,\Phi_{h}\right)  dx\right)  :=D_j\left(  h\right), j=1,...,l .
\]
Here the operator $\mathfrak{D}_j=\lim_{\varepsilon\rightarrow0}\left(
-\Delta-\lambda_{j}+i\varepsilon\right)  ^{-1}.$  Then we are finally lead to solve the fixed point problem
\begin{equation}
h=D(h):=(D_1(h),...,D_l(h)).
\end{equation}
Making use of the estimate of Lemma \ref{Sobolev}, we could show that
$D\left(  h\right)  $ maps the ball of radius $M_{1}%
\varepsilon^{\bar{p}}$ of $\bar{E}_{q}$ into itself for $M_{1}$ large and
$\varepsilon$ small and is a contraction mapping. We therefore get a fixed
point $h$ for this mapping, which yields a solution for our original problem.
\end{proof}

In the rest of this section, we sketch the proof of Corollary \ref{main}. In this case, we seek for a solution in the form
\[
u_{\varepsilon}\left(  x,y\right)  =w_{m}\left(  x\right)  +Z_{1}\left(
x\right)  f\left(  y\right)  +\Phi\left(  x,y\right)  .
\]
Note that $Z_1$ is radially symmetric in $x$. Here $\Phi$ is required to be radially symmetric in $x$ and orthogonal to $Z_1$ for each $y$. We write the function $f$ in the form  $f\left(  y\right)  =\varepsilon u_{1}\left( y\right)  +h\left(  y\right).$ Then similar as before, we prove the existence of solution by contraction mapping principle in suitable Banach spaces. Here we could work with functions $h$ in the space $F_\alpha$, and $\Phi$ in the space $\bar{F}_\alpha$, defined similarly as before.
\begin{definition}
The space $F_{\alpha}$ consists of functions $\xi$ defined on $\mathbb{R}^{m+n}$ which is radially symmetric in $x$,
satisfying
\[
\left\Vert \xi\right\Vert _{\ast,\alpha}:=\left\Vert \xi\right\Vert
_{L^{\alpha}\left(  \mathbb{R}^{m+n}\right)  }+\left\Vert \xi\right\Vert _{L^{\infty
}\left(  \mathbb{R}^{m+n}\right)  }<+\infty.
\]
The space $\bar{F}_{\alpha}$ consists of functions $\eta$ defined on $\mathbb{R}^{n}$,
satisfying
\[
\left\Vert \eta\right\Vert _{\ast\ast,\alpha}:=\left\Vert \eta\right\Vert
_{L^{\alpha}\left(  \mathbb{R}^{n}\right)  }+\left\Vert \eta\right\Vert _{L^{\infty
}\left(  \mathbb{R}^{n}\right)  }<+\infty.
\]

\end{definition}
Then we choose the exponent $\alpha$ to be \[\alpha=\frac{2(n+1)}{n-1}.\]
With this choice, under the assumption of Corollary \ref{main}, for $n\geq 5$, one has $\bar{p}=\min\{p,2\}\geq \alpha-1$. Then the rest of the proof follows from the arguments in that of Theorem \ref{main'}.

\section{Proof of Theorem \ref{main1} and Theorem \ref{main1'}}

We first prove Theorem \ref{main1}. The idea is still using contraction
mapping principle but we make the advantage of radial symmetry.  For the sake of convenience, we drop the subscript of $w_m$ and simply write it as $w$. We also write $Z_1$ as $Z$, $\lambda_1$ as $\lambda$. Denoting $r=\left\vert y\right\vert .$ We are looking for a
solution $u_{\varepsilon}$ in the form
\[
u_{\varepsilon}\left(  x,y\right)  =w\left(  x\right)  +Z\left(  x\right)
f\left(  r\right)  +\Phi.
\]
Here $\Phi$ is radially symmetric in $x$ and in $y.$ Plug this into the
equation
\[
-\Delta u_{\varepsilon}+u_{\varepsilon}-
\left\vert u_{\varepsilon}\right\vert ^{p}%
=0,
\]
we get%
\[
L\Phi+\left[  -f^{\prime\prime}-\frac{n-1}{r}f^{\prime}-\lambda f\right]
Z\left(  x\right)  =N\left(  f,\Phi\right).
\]
Let $f\left(  r\right)  =\varepsilon J_{n,0}\left(  r\right)  +h\left(
r\right)  .$ Then we need to solve%
\begin{equation}
L\Phi+\left[  -h^{\prime\prime}-\frac{n-1}{r}h^{\prime}-\lambda h\right]
Z\left(  x\right)  =N\left(  \varepsilon J_{n,0}+h,\Phi\right)  . \label{fi}%
\end{equation}
Here we require that
\[
\int_{\mathbb{R}^m}\Phi\left(  x,y\right)  Z\left(  x\right)  dx=0,\text{ for any }y.
\]
We are lead to solve
\[
\left\{
\begin{array}
[c]{l}%
-h^{\prime\prime}-\frac{n-1}{r}h^{\prime}-\lambda h=\int_{\mathbb{R}^m}Z\left(
x\right)  N\left(  \varepsilon J_{n,0}+h,\Phi\right)  dx\\
L\Phi=N\left(  \varepsilon J_{n,0}+h,\Phi\right)  -Z\left(  x\right)  \int%
_{\mathbb{R}^m}Z\left(  x\right)  N\left(  \varepsilon J_{n,0}+h,\Phi\right)  dx.
\end{array}
\right.
\]

We need to solve the equation
\begin{equation}
L\Phi=\xi\label{linear}%
\end{equation}
for given function $\xi$ defined in $\mathbb{R}^{m+n}.$

\begin{definition}
The space $S_{\beta}$ consists of those functions $\xi$ radially symmetric in the $x$ and $y$ variable, with
\[
\left\Vert \xi\right\Vert :=\sup_{\left(  x,y\right)  }\left\Vert \xi\left(
x,y \right)  \left(  1+|y|\right)  ^{\beta}e^{-\frac{1}{2}\left\vert x\right\vert
}\right\Vert _{C^{0,\alpha}\left(  B_{\left(  x,y\right)  }\left(  1\right)
\right)  }<+\infty.
\]
The space $\bar{S}_{\beta}$ consists of those radially symmetric functions $\eta $ defined in $\mathbb{R}^{n}$,  with
\[
\left\Vert \eta\right\Vert _{\symbol{94},\beta}:=\sup\left\Vert \left(
1+|y|\right)  ^{\beta}\eta\left(  y\right)  \right\Vert _{C^{0,\alpha}\left(
B_{y}\left(  1\right)  \right)  }<+\infty.%
\]

\end{definition}

\begin{lemma}
\label{S}Suppose $\xi$ is function radially symmetric in $x,y$
with $\left\Vert \xi\right\Vert <+\infty$ and
\[
\int_{\mathbb{R}^m}\xi\left(  x,y\right)  Z\left(  x\right)  dx=0,\text{ for any }y.
\]
Then the equation $\left(  \ref{linear}\right)  $ has a solution $\Phi,$
radially symmetric in $x,y$, satisfying
\[
\left\Vert \Phi\right\Vert \leq C\left\Vert \xi\right\Vert .
\]

\end{lemma}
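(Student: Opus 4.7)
The plan is to solve $L\Phi=\xi$ in three steps: (i) cut off $\xi$ so standard $L^{2}$-theory applies; (ii) solve each cut-off problem on the orthogonal complement of $Z$ via a spectral gap; (iii) pass to the limit using a uniform weighted pointwise bound from a barrier, combined with interior Schauder estimates to recover the Hölder part.

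Because the norm $\|\xi\|$ allows growth $e^{|x|/2}$ in $x$, one cannot expect $\xi\in L^{2}(\mathbb{R}^{m+n})$ in general, so I would first regularize: set $\xi_{R}=\xi\,\chi_{B_{R}}\in L^{2}$. Writing $L=L_{x}+(-\Delta_{y})$ with $L_{x}=-\Delta_{x}+(1-pw^{p-1})$, the radial non-degeneracy of the positive ground state $w$ (Kwong) gives that the spectrum of $L_{x}$ on $L^{2}_{\mathrm{rad}}(\mathbb{R}^{m})$ is $\{-\lambda\}\cup[\nu,+\infty)$ for some $\nu>0$, with simple eigenfunction $Z$. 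Hence on the closed subspace of radial-in-$(x,y)$ functions in $L^{2}(\mathbb{R}^{m+n})$ satisfying $\Phi(\cdot,y)\perp Z$ for a.e.\ $y$, $L$ is self-adjoint with spectrum in $[\nu,+\infty)$. Since the hypothesis $\int\xi(\cdot,y)Z\,dx=0$ places $\xi_{R}$ in this subspace, the spectral theorem produces a unique radial $\Phi_{R}\in H^{2}(\mathbb{R}^{m+n})$ with $L\Phi_{R}=\xi_{R}$, the orthogonality $\Phi_{R}(\cdot,y)\perp Z$, and $\|\Phi_{R}\|_{H^{2}}\le C_{R}\|\xi_{R}\|_{L^{2}}$.

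The crux is a uniform weighted bound on $\Phi_{R}$. Consider the barrier
\[
\Psi(x,y)=e^{|x|/2}(1+|y|)^{-\beta}.
\]
A direct computation yields $(-\Delta_{x}+1)e^{|x|/2}=\bigl(\tfrac{3}{4}-\tfrac{m-1}{2|x|}\bigr)e^{|x|/2}$, while $pw^{p-1}e^{|x|/2}$ is exponentially small in $x$ and $-\Delta_{y}(1+|y|)^{-\beta}\sim(n-1)\beta(1+|y|)^{-\beta-2}$ is of lower order compared with $(1+|y|)^{-\beta}$. Therefore $L\Psi\ge\tfrac{1}{2}\Psi$ outside some compact set $K\subset\mathbb{R}^{m+n}$ independent of $R$. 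A comparison argument on $Z^{\perp}$ (using the coercivity $\langle L\Phi_{R},\Phi_{R}\rangle_{L^{2}}\ge\nu\|\Phi_{R}\|_{L^{2}}^{2}$ to compensate for the lack of pointwise positivity of $L$ near $x=0$) gives $|\Phi_{R}(x,y)|\le C\|\xi\|\,\Psi(x,y)$ uniformly in $R$. A diagonal extraction then yields a limit $\Phi$ solving $L\Phi=\xi$ globally with the same pointwise bound, and interior Schauder estimates on the unit balls $B_{(x,y)}(1)$ convert this into the required weighted $C^{0,\alpha}$-control $\|\Phi\|\le C\|\xi\|$.

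The main obstacle is exactly the comparison step, since $L$ is not a positive operator pointwise: the potential $1-pw^{p-1}$ is negative near $x=0$, so the scalar maximum principle does not apply and $\Psi$ is not a genuine supersolution on the interior compact set $K$. The fix is to substitute integral positivity for pointwise positivity, using the coercivity of $L$ on $Z^{\perp}$: one cuts off the barrier outside a large ball, uses $L\Psi\ge\tfrac12\Psi$ at infinity to control the exterior, and uses coercivity plus the $L^{\infty}$-bound inherited from $\|\Phi_{R}\|_{H^{2}}$ via Sobolev embedding to absorb the interior contribution. As a clean alternative, Fourier analysis in $y$ represents $\Phi_{R}$ via the resolvent $(L_{x}|_{Z^{\perp}}+|\eta|^{2})^{-1}$, which is analytic and uniformly bounded in $\eta\in\mathbb{R}^{n}$ and whose $x$-integral kernel decays like $e^{-|x-x'|}$; direct convolution against $\xi_{R}$ then transfers the weighted bound on $\xi$ to the same bound on $\Phi_{R}$, bypassing the comparison argument entirely.
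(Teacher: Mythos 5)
The paper offers no proof of Lemma~\ref{S}; it simply declares the result standard and refers to del Pino--Kowalczyk--Pacard--Wei~\cite{M}. Your outline---solve on $Z^{\perp}$ in $L^{2}$ using the spectral gap of $L_{x}$ on radial functions (Kwong's nondegeneracy gives $\sigma(L_{x}|_{Z^{\perp}\cap L^{2}_{\mathrm{rad}}})\subset[\nu,+\infty)$), then upgrade to the weighted H\"older norm by barriers plus interior Schauder---is exactly the mechanism behind the linear theory in~\cite{M} and similar gluing papers, so in spirit you are reproducing the cited argument rather than taking a different route. You also correctly identify the main subtlety: $1-pw^{p-1}$ is negative for small $|x|$, so the scalar maximum principle fails there.

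The part of your sketch that does not close is the comparison step. You write that $L\Psi\ge\tfrac12\Psi$ ``outside some compact set $K$,'' but the region where the barrier inequality fails is the slab $\{|x|\le R_{0}\}\times\mathbb{R}^{n}$, which is \emph{not} compact---it is unbounded in $y$. Consequently ``cutting off the barrier outside a large ball and absorbing the interior with coercivity and Sobolev $L^{\infty}$'' does not work as stated: on $\{|x|\le R_{0},\ |y|\ \text{large}\}$ the $H^{2}\to L^{\infty}$ bound only gives $|\Phi_{R}|\le C$ with no decay in $|y|$, while what the comparison needs on the boundary $\{|x|=R_{0}\}$ is precisely the $(1+|y|)^{-\beta}$ decay of $\Phi_{R}$ that you are trying to prove. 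There is a genuine circularity. (Moreover $C_{R}=\|\xi_{R}\|_{L^{2}}/\nu$ is not uniform in $R$ unless one first establishes decay, so the $L^{2}$ bound cannot supply uniform interior control either.) The usual way out is to separate the variables: first obtain an $L^{\infty}$ a priori bound by a blow-up/compactness argument using orthogonality, then derive $y$-decay from the fact that $L=L_{x}+(-\Delta_{y})$ with $L_{x}|_{Z^{\perp}}\ge\nu>0$, so effectively one compares against a barrier for $-\Delta_{y}+\nu$; and finally derive $x$-decay by a one-dimensional comparison in $\{|x|>R_{0}\}$. Your ``clean alternative'' via Fourier transform in $y$ and the resolvent $(L_{x}|_{Z^{\perp}}+|\eta|^{2})^{-1}$ is indeed the neatest fix and does avoid the circularity; but as written it also needs the extra step of showing that the $y$-Green's function obtained by inverting the multiplier decays fast enough (this follows from analyticity and the uniform lower bound $\nu+|\eta|^{2}$, but it should be said). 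With one of these repairs the argument is complete and correct.
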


\begin{proof}
The proof of this type result is by now standard, we omit the details and
refer to \cite{M} for a proof.
\end{proof}

Next we proceed to the analysis of the first equation of the system. Let
$\eta$ be a function which decays at certain rate at infinity. The
homogeneous equation
\[
h^{\prime\prime}+\frac{n-1}{r}h^{\prime}+\lambda h=0
\]
has two linearly independent solutions $J_{n}\left(  \cdot\right)  $ and
$N_{n}\left(  \cdot\right)  .$ They both decay like $r^{-\frac{n-1}{2}}$ at
infinity. $J_{n}$ is regular near $0$ and $N_{n}$ is singular near $0.$
Moreover, $N_{n}\left(  r\right)  =O\left(  r^{2-n}\right)  $ for $r$ close to
zero. Variation of parameter formula tells us that the function
\[
N_{n}\left(  r\right)  \int_{0}^{r}J_{n}\left(  s\right)  \eta\left(
s\right)  s^{n-1}ds-J_{n}\left(  r\right)  \int_{0}^{r}N_{n}\left(  s\right)
\eta\left(  s\right)  s^{n-1}ds
\]
is a solution of the nonhomogeneous equation
\begin{equation}
h^{\prime\prime}+\frac{n-1}{r}h^{\prime}+\lambda h=\eta. \label{OD}%
\end{equation}

\begin{lemma}
Let $\eta\in\bar{S}_{\beta}$ with $\beta>\frac{n+1}{2}.$ Then the equation
$\left(  \ref{OD}\right)  $ has a solution $h\in\bar{S}_{\frac{n-1}{2}}$
satisfies%
\[
\left\Vert h\right\Vert _{\symbol{94},\frac{n-1}{2}}\leq C\left\Vert
\eta\right\Vert _{\symbol{94},\beta}.
\]

\end{lemma}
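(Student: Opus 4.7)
The plan is to construct $h$ via the variation of parameters formula written in the excerpt just above, namely
$$
h(r) = N_n(r) \int_0^r J_n(s) \eta(s) s^{n-1}\, ds - J_n(r) \int_0^r N_n(s) \eta(s) s^{n-1}\, ds,
$$
and then to estimate this explicit expression pointwise. The required Hölder-weighted bound will afterwards be obtained by combining the pointwise estimate with standard interior elliptic regularity for the equation $\Delta h + \lambda h = \eta$ (which is (\ref{OD}) in radial form) on unit balls of $\mathbb{R}^n$.

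First, I would verify that both integrals converge absolutely under the assumption $\beta > (n+1)/2$. Using $|\eta(s)| \leq C(1+s)^{-\beta}\|\eta\|_{\wedge,\beta}$ together with $|J_n(s)| + |N_n(s)| \leq C(1+s)^{-(n-1)/2}$ for $s \geq 1$, the boundedness of $J_n$ near zero, and $N_n(s) = O(s^{2-n})$ near zero, the two integrands behave like $O(s^{n-1})$ and $O(s)$ respectively near the origin (hence are integrable there) and like $O(s^{(n-1)/2-\beta})$ at infinity, which is integrable precisely because $\beta > (n+1)/2$.

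Next I would derive the pointwise bound. For $r \leq 1$ the singularity of $N_n$ is absorbed by the vanishing $\int_0^r J_n(s)\eta(s) s^{n-1}\, ds = O(r^n)$, so the first term of $h$ is $O(r^2)$; the second term is also $O(r^2)$, whence $h$ is regular and bounded on $[0,1]$. For $r \geq 1$, the factors $J_n(r)$ and $N_n(r)$ contribute $O(r^{-(n-1)/2})$ while the two integrals are uniformly bounded by their (finite) limits at $r = \infty$. Combining both ranges yields
$$
|h(r)| \leq C(1+r)^{-(n-1)/2}\|\eta\|_{\wedge,\beta},
$$
i.e. an $L^\infty$ bound with the correct decay.

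Finally, to upgrade this to the weighted $C^{0,\alpha}$ norm used to define $\bar{S}_{(n-1)/2}$, I would fix $y \in \mathbb{R}^n$ and apply interior Schauder estimates for $\Delta h + \lambda h = \eta$ on $B_y(1)$:
$$
\|h\|_{C^{0,\alpha}(B_y(1))} \leq C\bigl(\|h\|_{L^\infty(B_y(2))} + \|\eta\|_{C^{0,\alpha}(B_y(2))}\bigr).
$$
Multiplying through by $(1+|y|)^{(n-1)/2}$, inserting the previous pointwise bound, and using $\beta > (n-1)/2$ so that $\|\eta\|_{C^{0,\alpha}(B_y(2))}$ carries a decay faster than needed, delivers $\|h\|_{\wedge,(n-1)/2} \leq C\|\eta\|_{\wedge,\beta}$. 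The one technical point that requires real care is the regularity at the origin: the pointwise estimate has to exploit the exact cancellation between the $r^{2-n}$ blow-up of $N_n$ and the $O(r^n)$ vanishing of its companion integral, and this is the only place where the choice to anchor both integrals at $0$ (rather than at $\infty$) is essential.
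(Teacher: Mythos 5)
Your argument is correct and takes essentially the same route as the paper: the same variation-of-parameters solution with both integrals anchored at the origin, and the same decay bounds $|J_n(s)|,|N_n(s)|\leq C(1+s)^{-(n-1)/2}$ together with $|\eta(s)|\leq C(1+s)^{-\beta}\|\eta\|_{\widehat{\ },\beta}$ yielding convergence of the integrals and the weighted $L^\infty$ estimate for $r\geq 1$. You additionally spell out the cancellation between $N_n\sim r^{2-n}$ and the $O(r^n)$ companion integral near the origin and the interior Schauder upgrade to the $C^{0,\alpha}$-weighted norm, both steps that the paper's very terse proof leaves implicit.
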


\begin{proof}
Consider the following solution for $\left(  \ref{OD}\right)  :$
\[
h\left(  r\right)  :=N_{n}\left(  r\right)  \int_{0}^{r}J_{n}\left(  s\right)
\eta\left(  s\right)  s^{n-1}ds-J_{n}\left(  r\right)  \int_{0}^{r}%
N_{n}\left(  s\right)  \eta\left(  s\right)  s^{n-1}ds.
\]
We have, for $r>1,$
\begin{align*}
\left\vert h\left(  r\right)  \right\vert  &  \leq C\left\Vert \eta\right\Vert
_{\symbol{94},\beta}r^{-\frac{n-1}{2}}\int_{0}^{r}s^{\frac{n-1}{2}-\beta}ds\\
&  \leq C\left\Vert \eta\right\Vert _{\symbol{94},\beta}\left(  1+r\right)
^{-\frac{n-1}{2}}.
\end{align*}
This gives us the desired estimate.
\end{proof}

With these a priori estimate, one could prove Theorem \ref{main1} by
contraction mapping principle, similar as before. We omit the details.

Now we proceed to prove Theorem \ref{main1'}. We consider the case $p=2.$ The
other cases are similar, but notations will be heavier. The main point to
prove this theorem is to prove a priori estimate for the solutions of the
equation $\left(  \ref{OD}\right)  .$ Since $n=3,$ the fundamental solution
$J_{3}$ and $N_{3}$ of the ODE%
\[
\varphi^{\prime\prime}+\frac{2}{r}\varphi^{\prime}+\lambda \varphi=0
\]
has the asymptotic behavior
\begin{align*}
J\left(  r\right)   &  :=J_{3}\left(  r\right)  =r^{-1}\cos\left(
\sqrt{\lambda}r-\zeta\right)  +O\left(  r^{-2}\right)  ,\text{ as }r\rightarrow+\infty.\\
N\left(  r\right)   &  :=N_{3}\left(  r\right)  =r^{-1}\sin\left(
\sqrt{\lambda}r-\zeta\right)  +O\left(  r^{-2}\right)  ,\text{ as }r\rightarrow+\infty.
\end{align*}
where $\zeta\in \mathbb{R}$ is a constant depends on $\lambda.$ Let $\rho$ be a
cutoff function such that
\[
\rho\left(  r\right)  =\left\{
\begin{array}
[c]{c}%
1,r>2,\\
0,r<1.
\end{array}
\right.
\]
The key observation is the following

\begin{lemma}
\label{lemmah}Let
\[
\eta\left(  r\right)  =\rho\left(  r\right)  r^{-2}\left[  k_{1}\sin
^{2}\left( \sqrt{\lambda} r-\zeta\right)  +k_{2}\cos^{2}\left( \sqrt{\lambda}  r-\zeta\right)  +k_{3}%
\sin\left( \sqrt{\lambda}  r-\zeta\right)  \cos\left(  \sqrt{\lambda} r-\xi\right)  \right]  +\bar{\eta}%
\]
with
\[
\left\Vert \bar{\eta}\left(  r\right)  \left(  1+r\right)  ^{3}\right\Vert
_{C^{0,\alpha}}+%
{\displaystyle\sum}
\left\vert k_{i}\right\vert <C.
\]
Then the equation
\[
h^{\prime\prime}+\frac{1}{r}h^{\prime}+\lambda h=\eta
\]
has a solution $h\in C^{0,\alpha}[0,+\infty),$ with
\[
h\left(  r\right)  =c_{1}r^{-1}\sin\left(  \sqrt{\lambda} r-\zeta\right)  +\bar{h}\left(
r\right)  ,r>1,
\]
where
\[
\left\vert \bar{h}\left(  r\right)  \right\vert +\left\vert c_{1}\right\vert
\leq C\left(  1+r\right)  ^{-2}.
\]
The solution $h$ will be denoted by $H\left(  \eta\right)  .$
\end{lemma}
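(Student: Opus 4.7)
The plan is to build $H(\eta)$ from the variation-of-parameters formula
\[
h_p(r) = N(r)\int_0^r J(s)\eta(s)s^{n-1}\,ds - J(r)\int_0^r N(s)\eta(s)s^{n-1}\,ds \quad (n=3),
\]
which, thanks to the cutoff $\rho$ and the fact that $N(s)s^{n-1}=O(s)$ and $J(s)s^{n-1}=O(s^{n-1})$ near the origin, is automatically a $C^{0,\alpha}$ solution of the inhomogeneous equation on $[0,+\infty)$. All the work is at infinity, and I would split $\eta=\eta_{\mathrm{osc}}+\bar\eta$ and analyse the tail behaviour of the two integrals for each piece.

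For the oscillatory piece, the first step is to apply the identities $\sin^2\theta=\tfrac12(1-\cos 2\theta)$, $\cos^2\theta=\tfrac12(1+\cos 2\theta)$ and $\sin\theta\cos\phi=\tfrac12[\sin(\theta+\phi)+\sin(\theta-\phi)]$ to rewrite, for $s>2$,
\[
\eta_{\mathrm{osc}}(s) = s^{-2}\bigl[A + B\cos(2\sqrt{\lambda}s+\phi_1) + C\sin(2\sqrt{\lambda}s+\phi_2)\bigr],
\]
with constants $A,B,C$ controlled by $\sum|k_i|$. Multiplying by $J(s)s^{n-1}$ or $N(s)s^{n-1}$, each of which is $s$ times a $\sqrt{\lambda}$-frequency trig function up to an absolutely integrable error, and running product-to-sum once more, gives integrands of the form $s^{-1}\cos(\omega s+\psi)$ with $\omega\in\{\sqrt{\lambda},3\sqrt{\lambda}\}$ plus an $O(s^{-2})$ remainder. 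Crucially, no zero-frequency term appears: the constant mean $A$ of $\eta_{\mathrm{osc}}$ only meets the $\sqrt{\lambda}$-oscillating factors coming from $J$ or $N$, so no resonance occurs. A single integration by parts,
\[
\int_R^r \frac{\cos(\omega s+\psi)}{s}\,ds = \Bigl[\frac{\sin(\omega s+\psi)}{\omega s}\Bigr]_R^r + \int_R^r \frac{\sin(\omega s+\psi)}{\omega s^2}\,ds,
\]
then shows that each such integral converges as $r\to\infty$ with tail $O(r^{-1})$. The $\bar\eta$-contribution is easier: the bound $|\bar\eta|\le C(1+s)^{-3}$ makes the integrands absolutely integrable on $[1,\infty)$ with tails $O(r^{-1})$. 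Altogether, $\int_0^r J(s)\eta(s)s^{n-1}ds = \alpha + O(r^{-1})$ and $\int_0^r N(s)\eta(s)s^{n-1}ds = \beta + O(r^{-1})$ for constants $\alpha,\beta$ controlled by $\sum|k_i|+\|\bar\eta(1+r)^3\|_{C^{0,\alpha}}$.

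Substituting these expansions back yields $h_p(r) = \alpha N(r) - \beta J(r) + O(r^{-2})$ for $r>1$. Setting $H(\eta):= h_p + \beta J$ (still a smooth solution of the inhomogeneous equation, since $J$ is a regular homogeneous solution) kills the $J$-part and leaves $H(\eta)(r) = \alpha N(r) + O(r^{-2}) = c_1 r^{-1}\sin(\sqrt{\lambda}r-\zeta) + O(r^{-2})$ with $c_1:=\alpha$; the Hölder regularity on $[0,+\infty)$ follows from standard ODE theory applied with $\eta\in C^{0,\alpha}$. The main obstacle, and the point where I would spend most of the care, is the sharp $O(r^{-2})$ remainder: the crude pointwise size $|\eta|\sim r^{-2}$ would only give $O(r^{-1})$ for $h$. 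The improvement comes entirely from cancellation extracted by one integration by parts in each oscillatory integral, and it depends on ruling out a zero-frequency (resonant) component, which is exactly what the squared-and-product form of the leading term in $\eta$ guarantees; any resonance would produce a $\log r$ and wreck the estimate.
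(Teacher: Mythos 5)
Your proposal is correct and follows essentially the same route as the paper: variation of parameters, splitting off the oscillatory $r^{-2}$ part, observing via product-to-sum identities that all surviving frequencies are $\sqrt{\lambda}$ or $3\sqrt{\lambda}$ (so no zero-frequency resonance), integrating by parts once to get $O(r^{-1})$ tails, and then adding a multiple of $J$ to kill the $J$-component. You spell out the resonance-avoidance mechanism and the $H(\eta):=h_p+\beta J$ normalization more explicitly than the paper (which silently switches the second integral to $\int_r^\infty$ and only treats the $\sin^2$ piece), but the argument is the same.
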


\begin{proof}
Consider the solution
\begin{align*}
h\left(  r\right)   &  =N\left(  r\right)  \int_{0}^{r}J\left(  s\right)
\eta\left(  s\right)  s^{2}ds-J\left(  r\right)  \int_{0}^{r}N\left(
s\right)  \eta\left(  s\right)  s^{2}ds\\
&  =N\left(  r\right)  \int_{0}^{r}J\left(  s\right)  \rho\left(  s\right)
\sin^{2}\left(  \sqrt{\lambda} s-\zeta\right)  ds+J\left(  r\right)  \int_{r}^{+\infty
}N\left(  s\right)  \rho\left(  s\right)  \sin^{2}\left(  \sqrt{\lambda} s-\zeta\right)  ds.
\end{align*}
We have%
\[
\int_{0}^{r}J\left(  s\right)  \rho\left(  s\right)  \sin^{2}\left(
 \sqrt{\lambda}s-\zeta\right)  ds=\frac{1}{2}\int_{0}^{r}J\left(  s\right)  \rho\left(
s\right)  ds-\frac{1}{2}\int_{0}^{r}J\left(  s\right)  \rho\left(  s\right)
\cos\left(  2 \sqrt{\lambda}s-2\xi\right)  ds.
\]
Using the fact that $J\left(  s\right)  =s^{-1}\cos\left(  \sqrt{\lambda} s-\zeta\right)
+O\left(  s^{-2}\right)  ,$ we could estimate
\begin{align*}
\int_{0}^{r}J\left(  s\right)  \rho\left(  s\right)  ds  &  =\int_{0}%
^{+\infty}J\left(  s\right)  \rho\left(  s\right)  ds+O\left(  r^{-1}\right)
,\\
\int_{0}^{r}J\left(  s\right)  \rho\left(  s\right)  \cos\left(
2 \sqrt{\lambda}s-2\xi\right)  ds  &  =\int_{0}^{+\infty}J\left(  s\right)  \rho\left(
s\right)  \cos\left(  2 \sqrt{\lambda}s-2\zeta\right)  ds+O\left(  r^{-1}\right)  .
\end{align*}

Similarly, for $r>1,$
\[
\left\vert \int_{r}^{+\infty}J\left(  s\right)  \rho\left(  s\right)
\bar{\eta}\left(  s\right)  s^{2}ds\right\vert \leq\left\vert \int%
_{r}^{+\infty}s^{-1}s^{-3}s^{2}ds\right\vert \leq Cr^{-1}.
\]
Hence
\[
h\left(  r\right)  =c_{1}r^{-1}\sin\left(  \sqrt{\lambda} r-\zeta\right)  +\bar{h}\left(
r\right)  ,r>1,
\]
where
\[
\left\vert \bar{h}\left(  r\right)  \right\vert +\left\vert c_{1}\right\vert
\leq C\left(  1+r\right)  ^{-2}.
\]
This finishes the proof.
\end{proof}

\begin{definition}
The space $P_{\alpha}$ consists of functions $\eta\left(  r\right)  $ of the
form
\[
\eta\left(  r\right)  =c_{1}\rho\left(  r\right)  r^{-1}\sin\left(
 \sqrt{\lambda}r-\zeta\right)  +\bar{\eta}\left(  r\right)  ,
\]
with
\[
\left\Vert \eta\right\Vert _{\#,\alpha}=\left\vert c_{1}\right\vert
+\sup_{k\geq0}\left\Vert \left(  1+r\right)  ^{2}\bar{\eta}\left(  r\right)
\right\Vert _{C^{0,\alpha}\left(  \left[  k,k++1\right]  \right)  }.
\]

\end{definition}

With the previous results at hand, one could then use the contraction mapping
principle to prove Theorem \ref{main1'}.

\begin{proof}
[Proof of Theorem 3]The proof is similar as before. We sketch it.

We search for a solution of the form
\[
w\left(  x\right)  +Z\left(  x\right)  \left(  \varepsilon J\left(  \left\vert
y\right\vert \right)  +h\left(  r\right)  \right)  +\Phi\left(  x,y\right)  ,
\]
where $\int_{\mathbb{R}^m}%
\Phi\left(  x,y\right)  Z\left(  x\right)  dx=0,$ for any $y.$ We need to
solve
\[
\left\{
\begin{array}
[c]{l}%
-h^{\prime\prime}-\frac{1}{r}h^{\prime}-\lambda h=\int_{\mathbb{R}^m}Z\left(
x\right)  N\left(  \varepsilon J+h,\Phi\right)  dx,\\
L\Phi=N\left(  \varepsilon J+h,\Phi\right)  -Z\left(  x\right)  \int%
_{\mathbb{R}^m}Z\left(  x\right)  N\left(  \varepsilon J+h,\Phi\right)  dx.
\end{array}
\right.
\]
For each $h\in P_{0,\alpha}$ with $\left\Vert h\right\Vert _{\#,\alpha}\leq
M\varepsilon^{2}.$ We write
\[
h\left(  r\right)  =c_{1}\rho\left(  r\right)  \sin\left( \sqrt{\lambda} r-\zeta\right)
+\bar{h}\left(  r\right)  ,
\]
By Lemma \ref{S}, the second equation in this system could be solved and we
obtain a solution $\Phi=\Phi_{h}$ with
\[
\left\Vert \Phi_{h}\left(  x,y\right)  \left(  1+r\right)  ^{2}\right\Vert
_{C^{2,\alpha}}\leq C\varepsilon^{2}.
\]
We insert this solution into the first equation of the system and proceed to
solve
\begin{equation}
-h^{\prime\prime}-\frac{1}{r}h^{\prime}-\lambda h=\int_{\mathbb{R}^m}Z\left(
x\right)  N\left(  \varepsilon J+h,\Phi_{h}\right)  dx. \label{h}%
\end{equation}
Note that
\[
N\left(  \varepsilon J+h,\Phi_{h}\right)  =\left(  \varepsilon J+h\right)
^{2}+\Phi_{h}^{2}+2\left(  \varepsilon J+h\right)  \Phi_{h}=2\varepsilon
Jh+h^{2}+\varepsilon^{2}J^{2}+O\left(  \left(  1+r\right)  ^{-3}\right)  .
\]
We write the equation $\left(  \ref{h}\right)  $ as
\[
h=H\left(  \int_{\mathbb{R}^m}Z\left(  x\right)  N\left(  \varepsilon J+h,\Phi
_{h}\right)  dx\right)  :=\bar{H}\left(  h\right)  .
\]
Note that for the function $2\varepsilon Jh+h^{2}+\varepsilon^{2}J^{2},$ the
coefficient of the $r^{-2}$ part has the form
\[
k_{1}\cos^{2}\left(\sqrt{\lambda}  r-\xi\right)  +k_{2}\sin^{2}\left(\sqrt{\lambda}  r-\xi\right)
+k_{3}\sin\left(\sqrt{\lambda}  r-\xi\right)  \cos\left( \sqrt{\lambda} r-\xi\right)  .
\]
We would like to get a solution for this equation by contraction mapping
principle. By Lemma \ref{lemmah}, for each $h\in P_{\alpha}$ with $\left\Vert
h\right\Vert _{\#,\alpha}\leq M\varepsilon^{2},$ we have the estimate
\[
\left\Vert \bar{H}\left(  h\right)  \right\Vert _{\#,\alpha}\leq
C\varepsilon^{2}+CM\varepsilon^{3}.
\]
Hence for $\varepsilon$ small enough $\bar{H}$ maps the ball of radius
$M\varepsilon^{2}$ into itself and one could also check it is a contraction
map. We then get a fixed point for this map and thus complete the proof.
\end{proof}

\end{document}